\def\Xint#1{\mathchoice{\XXint\displaystyle\textstyle{#1}}%
	{\XXint\textstyle\scriptstyle{#1}}%
	{\XXint\scriptstyle\scriptscriptstyle{#1}}%
	{\XXint\scriptscriptstyle\scriptscriptstyle{#1}}%
	\!\int}
\def\XXint#1#2#3{{\setbox0=\hbox{$#1{#2#3}{\int}$ }
		\vcenter{\hbox{$#2#3$ }}\kern-.6\wd0}}
\def\dashint{\Xint-}
\newcommand{\avgint}[2]{\dashint_{#2} {#1}}
\newcommand{\R}{\mathbb{R}}
\newcommand{\C}{\mathbb{C}}
\newcommand{\N}{\mathbb{N}}
\newcommand{\Z}{\mathbb{Z}}
\newcommand{\bla}{\big \langle}
\newcommand{\bra}{\big \rangle}
\newcommand{\ud}[0]{\,\mathrm{d}}
\newcommand{\esssup}[0]{\operatornamewithlimits{ess\,sup}}
\newcommand{\abs}[1]{|#1|}
\newcommand{\Babs}[1]{\Big|#1\Big|}
\newcommand{\bnorm}[2]{\big|#1\big|_{#2}}
\newcommand{\Bnorm}[2]{\Big|#1\Big|_{#2}}
\newcommand{\Norm}[2]{\|#1\|_{#2}}
\newcommand{\bNorm}[2]{\big\|#1\big\|_{#2}}
\newcommand{\ave}[1]{\langle #1\rangle}
\newcommand{\BMO}[0]{\operatorname{BMO}}
\newcommand{\supp}[0]{\operatorname{spt}}
\newcommand{\loc}[0]{\operatorname{loc}}
\theoremstyle{definition}
\newtheorem{definition}[subsection]{Definition}
\newtheorem{proposition}[subsection]{Proposition}
\newtheorem{corollary}[subsection]{Corollary}
\newtheorem{conjecture}[subsection]{Conjecture}
\theoremstyle{plain}
\theoremstyle{remark}
\newtheorem{remark}[subsection]{Remark}
\theoremstyle{theorem}
\newtheorem{theorem}[subsection]{Theorem}
\newtheorem{lemma}[subsection]{Lemma}
\DeclareMathOperator{\sgn}{sgn}
\numberwithin{equation}{section}
\title[Iterated commutators under a joint condition on the tuple of multiplying functions]{Iterated commutators under a joint condition on the tuple of multiplying functions}
\author{Tuomas Hyt\"onen}
\address[T. Hyt\"onen]{Department of Mathematics and Statistics, P.O. Box 68 (Pietari Kalmin katu 5),
FI-00014 University of Helsinki, Finland}
\email{tuomas.hytonen@helsinki.fi}
\author{Kangwei Li}
\address[K. Li]{Center for Applied Mathematics, Tianjin University, Weijin Road 92, 300072 Tianjin, China}
\email{kli@tju.edu.cn}
\author{Tuomas Oikari}
\address[T. Oikari]{Department of Mathematics and Statistics, P.O. Box 68 (Pietari Kalmin katu 5),
FI-00014 University of Helsinki, Finland}
\email{tuomas.v.oikari@helsinki.fi}
\subjclass[2010]{42B20}
\keywords{Calder\'on--Zygmund operators, singular integrals, iterated commutators, joint conditions, sparse operators}
\thanks{T.H. and T.O. are members of the Finnish Centre of Excellence in Analysis and Dynamics Research supported by the Academy of Finland (project No. 307333). T.H. is also supported by the Academy of Finland project No. 314829 and T.O. by the three-year research grant 75160010 of the University of Helsinki and the Academy of Finland
project No. 306901.
}
\begin{document}
	
\maketitle

\begin{abstract}
	
We present a pair of joint conditions on the two functions $b_1,b_2$ strictly weaker than $b_1,b_2\in\BMO$ that almost characterize the $L^2$ boundedness of the iterated commutator $[b_2,[b_1,T]]$ of these functions and a Calder\'on-Zygmund operator $T.$ Namely, we sandwich this boundedness between two bisublinear mean oscillation conditions of which one is a slightly bumped up version of the other.	

\end{abstract}

\section{Introduction}

	The study of commutators of Calder\'on-Zygmund operators with pointwise multiplication has been a long standing interest in the field of harmonic analysis; for example, in the fundamental paper of Coifman, Rochberg, Weiss \cite{CRW} a characterization of the space $\BMO(\R^d)$ is given with respect to the commutators taken with the Riesz transforms:
	$$
	[b,R_j]:L^2(\R^d)\to L^2(\R^d)\quad\mbox{ boundedly for all} \quad j=1,\dots,d\quad
	$$
	if and only if $b\in\BMO(\R^d).$ Here $[b,R_j] = bR_j - R_j(b\,\cdot)$.
	Already in \cite{CRW} it was shown that $b\in\BMO$ is a sufficient condition for the boundedness of the iterated commutator $[b,[b,\dots,[b,T]]]$ of pointwise multiplications and a Calder\'on-Zygmund operator and the same argument extends to the case of commutators $[b_k,[b_{k-1},\dots,[b_1,T]]]$ with different functions, all in $\BMO$ separately.
	
	Our object is to make the first systematic study of the iterated commutator $[b_2,[b_1,R_j]]$
	in the case of two different functions $b_1,b_2.$ In particular, we want to identify a joint condition on the pair $(b_1, b_2)$ that is weaker than the individual conditions
	$b_1, b_2 \in \BMO$, that is as close to optimal as possible, and which still guarantees the boundedness of the commutator.
	 This is, in some sense, similar in spirit to the case of bilinear weighted theory, where $w_1, w_2 \in A_4$ is not the optimal condition
	for the boundedness of bilinear singular integrals from $L^4(w_1) \times L^4(w_2)$ to $L^2(w_1^{1/2}w_2^{1/2})$, but rather there
	is a genuinely bilinear joint condition $(w_1, w_2) \in A_{(4,4)}$ introduced by Lerner, Ombrosi, P\'erez, Torres and Trujillo-Gonz\'alez \cite{LOPTT}. In the weighted
	case the identification of this genuinely bilinear condition has been highly impactful.
		
	We study two-sided estimates for the $L^2 \to L^2$ norm of the commutator $[b_2,[b_1,T]]$. While the upper bounds will be valid for all bounded singular integrals,
	the lower bounds require some suitable non-degeneracy, and here we work with the Riesz
	 transforms
	\[
	R_jf(x) = \lim_{\varepsilon\to 0}\int_{\abs{x-y}>\varepsilon}\frac{x_j-y_j}{|x-y|^{d+1}}f(y)\ud y, \quad f\in L^2(\R^d),\quad \ j=1,\cdots, d.
	\]	
	We show that
	\begin{equation}\label{intro1}
	C_d(S_{2}(b_1,b_2)+T_{2}(b_1,b_2))  \leq \bNorm{[b_2,[b_1,T]]}{L^2(\R^d)\to L^2(\R^d)}\leq  C_{T,\varepsilon}(S_{2+\varepsilon}(b_1,b_2)+T_{2+\varepsilon}(b_1,b_2)),
	\end{equation}
	where the constant $C_{T,\varepsilon}$ tends to infinity as $\varepsilon$ tends to zero and the joint conditions $S_p,T_p$, with $0<p<\infty,$ imposed on the complex valued functions $b_1,b_2$ are defined by
	\begin{equation*}\label{intro2}
	S_p(b_1,b_2)  = \sup_{Q}\left(\frac{1}{\abs{Q}}\int_Q\abs{b_1-\langle b_1\rangle_Q}^p\right)^{1/p}\left(\frac{1}{\abs{Q}}\int_Q\abs{b_2-\langle b_2\rangle_Q}^p\right)^{1/p}
	\end{equation*}
	and
	\begin{equation*}\label{intro3}
	T_p(b_1,b_2)  = \sup_{Q}\left(\frac{1}{\abs{Q}}\int_Q\abs{b_1-\langle b_1\rangle_Q}^p\abs{b_2-\langle b_2\rangle_Q}^p\right)^{1/p},\qquad b_1,b_2,b_1b_2\in L^p_{loc}(\R^d).
	\end{equation*}
	Here the supremums are taken over all cubes. Whenever it is well understood which functions $b_1,b_2$ are in question, we refer to these conditions shortly as $T_p$ and $S_p.$
	
	We show by example  that the lower bound in \eqref{intro1} does not improve to $S_{2+\varepsilon}(b_1,b_2)+T_{2+\varepsilon}(b_1,b_2)$ for any $\varepsilon>0$ -- that is,
	the obtained upper bound is not necessary.
	This leads us to consider joint conditions involving Young functions that can be made strictly weaker than $S_{2+\varepsilon}+T_{2+\varepsilon}$ for all $\varepsilon >0.$
	Hence, we prove the commutator upper bound with these updated conditions with a version of the sparse domination principle introduced in Lerner \cite{L}.

\subsection{Basic notation}
	We denote $A \lesssim B$, if $A \leq C B$ for some constant $C>0$ depending only on the dimension of the underlying space, on integration exponents and on other concurrently unimportant absolute constants appearing in the assumptions.
	Then naturally $A \sim B$, if $A \lesssim B$ and $B \lesssim  A.$ Subscripts on constants ($C_{a,b,c,...}$) signify their dependence on those subscripts.
	
	We also denote the space $L^p(\R^d)$ with $L^p.$
	
	Integral average is by dash or brackets: $\frac{1}{\abs{Q}}\int_Qf = \avgint{f}{Q} = \langle f\rangle_Q.$
	
	When we say that an operator $A$ is bounded on $L^p$ we mean that $A:L^p\to L^p$ boundedly.
	
\subsection*{Acknowledgements}
We thank Henri Martikainen for posing research questions for this paper and for valuable discussions and comments. We also thank the anonymous referee for constructive comments that improved the presentation.

\section{Necessary conditions}
We move on to derive the lower bound $S_2+T_2$ for the iterated commutator taken with the Riesz transforms.
Later we see that the condition $S_2+T_2$ is not strong enough to imply the $L^2$ boundedness of the commutator, however.

Before proceeding any further, let us precisely define the commutator $[b_2,[b_1,T]].$
\begin{definition}\label{comdef} Let $b_i,$ $i=1,2,$ be such that $b_1,b_2,b_1b_2 \in L^2_{loc}(\R^d)$ and denote $b=(b_1,b_2).$  With $T$ being an operator on $L^2(\R^d)$, the commutator $C_bT$ on $L^{\infty}_c(\R^d)$ is defined as
	\begin{equation*}\label{basicdef}
	C_bT= [b_2,[b_1,T]],
	\end{equation*}
	where $[A,B] = AB-BA$ for any two operations $A,B,$ and $b_if(\cdot) = b_i(\cdot)f(\cdot).$
\end{definition}
We deal with the second order commutator $[b_2,[b_1,T]]$ but our results concerning sufficient conditions could just as well be formulated in the higher order cases.

\begin{lemma}\label{rieszlemma} Let $R_j$ be the jth Riesz transform on $\R^d,$ $j=1,\dots,d,$ $f_1,f_2\in L^{\infty}_c$ and $b_1,b_2,b_1b_2 \in L^1_{loc}.$ Under these assumptions, for all cubes $Q,$ we have that
	\begin{equation*}
	\Bnorm{\avgint{\avgint{\prod_{i=1}^{2}(b_i(x)-b_i(y))f_2(y)f_1(x) \ud y \ud x}{Q}}{Q}}{} \leq C_d\sum_{i=1}^{k}\bNorm{C_bR_i}{L^p\to L^p} \left( \avgint{\abs{f_1}^p}{Q} \right) ^{1/p}\left( \avgint{\abs{f_2}^{p'}}{Q} \right) ^{1/p'}.
	\end{equation*}
\end{lemma}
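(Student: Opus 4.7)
\medskip
\noindent \emph{Proof plan.} I would follow a bilinear adaptation of the classical Chaffee--Torres testing argument. Fix the cube $Q$. Since $C_b T = [b_2,[b_1,T]]$ is unchanged under $b_i \mapsto b_i - \langle b_i\rangle_Q$, I normalize so that $\langle b_i\rangle_Q = 0$ for $i=1,2$. For each direction $j \in \{1,\ldots,d\}$, I introduce a translated ``witness'' cube $\wt{Q}_j := Q + c_0\ell(Q) e_j$, with $c_0 > 0$ a dimensional constant chosen large enough that for $(x,y) \in Q \times \wt{Q}_j$ the Riesz kernel $K_j(x,y) = (x_j - y_j)/|x-y|^{d+1}$ has definite sign, satisfies $|K_j(x,y)| \sim \ell(Q)^{-d}$, and varies by at most a relative $O(1/c_0)$ factor across $Q \times \wt{Q}_j$.

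The first key step is an off-diagonal estimate. Writing
\begin{equation*}
\langle C_b R_j(g_2 1_{\wt{Q}_j}), g_1 1_Q\rangle = \int_Q\int_{\wt{Q}_j} K_j(x,y) \prod_{i=1}^{2}(b_i(x)-b_i(y)) g_1(x) g_2(y)\,\ud y\,\ud x,
\end{equation*}
approximating $K_j$ by its near-constant value on $Q \times \wt{Q}_j$, and invoking $|\langle C_b R_j F, G\rangle| \leq \bNorm{C_b R_j}{L^p\to L^p}\|F\|_{L^p}\|G\|_{L^{p'}}$, one obtains
\begin{equation*}
\frac{1}{|Q||\wt{Q}_j|}\Bigl|\int_Q\int_{\wt{Q}_j} \prod_{i=1}^{2}(b_i(x)-b_i(y)) g_1(x) g_2(y)\,\ud y\,\ud x\Bigr| \lesssim \bNorm{C_b R_j}{L^p\to L^p} \langle |g_1|^{p'}\rangle_Q^{1/p'} \langle |g_2|^p\rangle_{\wt{Q}_j}^{1/p},
\end{equation*}
with the $O(1/c_0)$ kernel-approximation error absorbed by fixing $c_0$ large.

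To reduce the stated on-diagonal bound over $Q \times Q$ to this off-diagonal estimate, I use the algebraic identity
\begin{equation*}
\prod_{i=1}^{2}(b_i(x) - b_i(y)) = \prod_{i=1}^{2}\bigl[(b_i(x) - b_i(y+\tau)) - (b_i(y) - b_i(y+\tau))\bigr], \qquad \tau := c_0\ell(Q) e_j,
\end{equation*}
expand, and in the leading term $\prod_{i=1}^2(b_i(x) - b_i(y+\tau))$ perform the substitution $y+\tau \mapsto y'$, so that $y'$ runs over $\wt{Q}_j$; this reduces directly to the off-diagonal estimate with the choice $g_2(y') = f_1(y' - \tau)1_{\wt{Q}_j}(y')$, the change of variables converting the $\wt{Q}_j$-average of $|g_2|^p$ back into the desired $Q$-average of $|f_1|^p$. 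The three remaining correction terms involve the discrete differences $D_i(y) := b_i(y) - b_i(y+\tau)$ and have structurally lower-order bilinear form.

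The main obstacle is the treatment of these correction terms, since they ostensibly invite bounds on single commutators $[b_1,R_j]$ or $[b_2,R_j]$ that are not controlled by our hypothesis. I would resolve this by realizing each correction term, through a further application of the same translation/testing trick (using the near-constancy of $K_j$ on pairs of translates of $Q$), again as an iterated-commutator pairing $\langle C_b R_j F, G\rangle$ against suitable localized test functions built from translates of $1_Q$ and the $f_i$. Summing the resulting estimates over the directions $j=1,\ldots,d$ then produces the stated inequality with the dimensional constant $C_d$.
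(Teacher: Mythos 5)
Your proposal does not follow the paper's route, and as written it has two genuine gaps that I don't see how to fill.

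The paper never leaves the diagonal. It inserts
$1=\sum_{j=1}^d\frac{(x_j-y_j)^2}{|x-y|^2}=\sum_{j=1}^d K_j(x,y)\,(x_j-y_j)|x-y|^{d-1}$
into the double integral over $Q\times Q$ and then expands the factor $(x_j-y_j)|x-y|^{d-1}$ as a convergent sum of \emph{tensor products} of bounded functions of $x$ and $y$ separately: a polynomial expansion when $d$ is odd, and a Fourier-series expansion (valid for any $d\geq 2$, via Janson's idea) for the general case. Each resulting term is $K_j(x,y)\phi_\alpha(x)\psi_\alpha(y)$, which makes the inner integral an honest iterated-commutator pairing $\langle C_bR_j(\psi_\alpha f_2 1_Q),\phi_\alpha f_1 1_Q\rangle$, and H\"older finishes. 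No translated cube, no correction terms.

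Two steps in your plan do not go through. First, the ``off-diagonal estimate'' as derived: the approximation $K_j=c(1+O(1/c_0))$ on $Q\times\wt{Q}_j$ only lets you absorb the error if the integrand has a definite sign, so that $c\int\!\!\int|\sigma g_1 g_2|$ controls the error term. For complex-valued $b_1,b_2$ the symbol $\sigma(x,y)=\prod(b_i(x)-b_i(y))$ has no sign, and the error $\int\!\!\int(K_j-c)\sigma g_1g_2$ is not controlled by the pairing you are trying to bound; to handle it you would need a separable (tensor) expansion of $K_j-c$ on $Q\times\wt{Q}_j$ --- which is exactly the device the paper deploys, and then one may as well apply it directly on $Q\times Q$ without the translation. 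Second, and more seriously, the correction terms coming from
$\prod_{i}(b_i(x)-b_i(y))=\prod_{i}\bigl[(b_i(x)-b_i(y+\tau))-D_i(y)\bigr]$, $D_i(y)=b_i(y)-b_i(y+\tau)$,
are not iterated-commutator pairings. The two cross terms reduce (after substitution) to bilinear forms with a single bracket $b_i(x)-b_i(y')$, i.e.\ to pairings of the \emph{single} commutator $[b_i,R_j]$, whose norm is not controlled by $\|C_bR_j\|$; and the fourth term $\int_Q f_1\cdot\int_Q D_1D_2 f_2$ has no $(x,y)$-pairing structure left at all (the two arguments of the product symbol, $y$ and $y+\tau$, are locked to one another, so no off-diagonal testing is available). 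The sentence ``I would resolve this by realizing each correction term \ldots again as an iterated-commutator pairing'' is precisely the missing content, and I do not see how to supply it; for this reason the known lower-bound proofs for iterated commutators avoid this translation trick and instead use either the separable-kernel expansion (as here) or a median/oscillation argument.
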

\begin{proof}
	Our proof separates into two cases, to odd and even dimensions.
	
	\textit{Case 1, $d$ is odd}:
	Let $d = 2k+1$ for some $k\in\N$.
	By composing back and forth with the translation $x\mapsto x-c_Q$, we may assume that the cube $Q$ is centred at the origin.
	We begin with introducing $1$ as
	\begin{equation}
	1=\sum_{i=1}^d\frac{(x_i-y_i)^2}{\abs{x-y}^{2}}
	=\sum_{i=1}^d\frac{(x_i-y_i)}{\abs{x-y}^{d+1}}(x_i-y_i)\abs{x-y}^{d-1},
	\end{equation}
	denote $b(x,y) = \prod_{i=1}^{2}(b_i(x)-b_i(y)),$
	and then proceed with:
	\begin{equation*}
	\begin{split}
	& \Bnorm{\int_Q\int_Qb(x,y)f_2(y)f_1(x)\ud y \ud x}{} = \Bnorm{\int_Q\lim_{\varepsilon\to 0}\int_{\abs{x-y}>\varepsilon}b(x,y)f_2(y)1_Q(y)f_1(x)\ud y \ud x}{} \\	
	&= \Bnorm{\int_Q\lim_{\varepsilon\to 0}\int_{\abs{x-y}>\varepsilon}b(x,y) \sum_{i=1}^d\frac{(x_i-y_i)^2}{\abs{x-y}^2} f_2(y)1_Q(y)f_1(x)\ud y \ud x}{} \\
	&= \Bnorm{\int_Q\lim_{\varepsilon\to 0}\int_{\abs{x-y}>\varepsilon}b(x,y) \sum_{i=1}^d\frac{Y_i(x-y)^2}{\abs{x-y}^{2d}} f_2(y)1_Q(y)f_1(x)\ud y \ud x}{},  \\
	\end{split}
	\end{equation*}
	where $Y_i(x) = x_i\abs{x}^{d-1}.$
	We momentarily force the expression into this form in order to contrast it with the similar argument emplying spherical harmonics given in \cite{CRW}.
	
	For a given $\alpha = (\alpha_1,\dots,\alpha_d)\in\N^d$ and $x\in\R^d,$ let $x^{\alpha} = \prod_{i=1}^dx_i^{\alpha_i}.$
	We continue with
	\begin{align*}
	& \Bnorm{\int_Q\lim_{\varepsilon\to 0}\int_{\abs{x-y}>\varepsilon}b(x,y) \sum_{i=1}^d\frac{Y_i(x-y)^2}{\abs{x-y}^{2d}} f_2(y)1_Q(y)f_1(x)\ud y \ud x}{}  \\
	&= \Bnorm{\int_Q\lim_{\varepsilon\to 0}\int_{\abs{x-y}>\varepsilon}b(x,y) \sum_{i=1}^d\frac{x_i-y_i}{\abs{x-y}^{d+1}}(x_i-y_i)\Big(\sum_{j=1}^d(x_j-y_j)^2\Big)^{k} f_2(y)1_Q(y)f_1(x)\ud y \ud x}{}  \\
	&= \Bnorm{\int_Q\lim_{\varepsilon\to 0}\int_{\abs{x-y}>\varepsilon}\sum_{i=1}^d\sum_{\alpha+ \beta= d}b(x,y)\frac{x_i-y_i}{\abs{x-y}^{d+1}} a_{\alpha,\beta}f_1(x)x^{\alpha}y^{\beta}  f_2(y)1_Q(y)\ud y \ud x}{}  \\
	& \overset{\ast}{=} \Bnorm{\sum_{i=1}^d\sum_{\alpha+ \beta= d}a_{\alpha,\beta}\int_Qf_1(x)x^{\alpha}\lim_{\varepsilon\to 0}\int_{\abs{x-y}>\varepsilon}b(x,y) \frac{x_i-y_i}{\abs{x-y}^{d+1}}y^{\beta} f_2(y)1_Q(y)\ud y \ud x}{}  \\
	&\leq \sum_{i=1}^d\sum_{\alpha+ \beta= d}\abs{a_{\alpha,\beta}}\bNorm{(\cdot)^{\alpha}f_1}{L^{p'}(Q)}\bNorm{C_bR_i((\cdot)^{\beta}f_21_Q)}{L^p(Q)} \\
	&\leq \sum_{i=1}^d\sum_{\alpha+ \beta= d}\abs{a_{\alpha,\beta}}\bNorm{(\cdot)^{\alpha}}{L^{\infty}(Q)}\bNorm{f_1}{L^{p'}(Q)}\bNorm{(\cdot)^{\beta}}{L^{\infty}(Q)}\bNorm{C_bR_i}{L^p\to L^p}\bNorm{f_2}{L^p(Q)} \\
	&\leq C_d\sum_{i=1}^d\sum_{\alpha + \beta = d}\abs{a_{\alpha\beta}} \abs{Q}\bNorm{C_bR_i}{L^p\to L^p}\bNorm{f_1}{L^{p'}(Q)}\bNorm{f_2}{L^p(Q)}\\
	&\leq C_d\abs{Q}\sum_i^d\bNorm{C_bR_i}{L^p\to L^p}\bNorm{f_1}{L^{p'}(Q)}\bNorm{f_2}{L^p(Q)},
	\end{align*}
	where at $\ast$ we used the fact that the limits exist separetely as $R_i((\cdot)^{\beta}1_Qf_2b(x,\cdot))(x),$
	and where in the second to last esimate we used the assumption that $Q$ is centered at the origin.
	Dividing by $\abs{Q}^2$ gives the claim.
	
	\textit{Case 2, $d\geq2$}:
	In the previous estimate we saw that the key issue with the lower bound for $C_b R_i$'s is the following:	
	We introduce $1$ as $\sum_{i=1}^d(x_i-y_i)^2\abs{x-y}^{-2}$, and would like to view this as $(x_i-y_i)\abs{x-y}^{-d-1}$ times functions that depend only on $x$ and only on $y$. As we saw:
	\begin{equation}\label{eq:formula}
	1=\sum_{i=1}^d\frac{(x_i-y_i)^2}{\abs{x-y}^{2}}
	=\sum_{i=1}^d\frac{(x_i-y_i)}{\abs{x-y}^{d+1}}(x_i-y_i)\abs{x-y}^{d-1},
	\end{equation}
	and the problem becomes about expanding $\abs{x-y}^{d-1}$ when $d$ is even, hence $d$ was odd.
	
	Consider the function $z_i\abs{z}^{d-1}$ of $z\in\R^d$. By induction, we check that $\partial^\alpha(z_i\abs{z}^{d-1})$ is a linear combination of terms of the form $z^\beta\abs{z}^{d-\abs{\alpha}-\abs{\beta}}$, where $\abs{\beta}\leq\abs{\alpha}+1$. In particular, when $\abs{\alpha}=d+1$, then $\partial^\alpha(z_i\abs{z}^{d-1})$ is a linear combination of terms of the form $z^\beta\abs{z}^{-1-\abs{\beta}}$. In particular, $\abs{\partial^\alpha(z_i\abs{z}^{d-1})}\lesssim\abs{z}^{-1}\in L^1_{\loc}(\R^d)$ for $d\geq 2$.
	
	Now let $\phi\in C_c^\infty(\R^d)$ be a smooth bump such that $\phi\equiv 1$ in $Q(0,\frac14)$, and $\phi$ is supported in $Q(0,\frac12)$ (cube of centre $0$ and ``radius'' $\frac12$, hence sidelength $1$). We consider the function $\phi_i(z)=\phi(z)z_i\abs{z}^{d-1}$. By the previous computation and product rule, this satisfies
	$$\abs{\partial^\alpha\phi_i}\lesssim\abs{z}^{-1} 1_{Q(0,\frac12)} \in L^1(\R^d)$$
	for $\abs{\alpha}=d+1$ and $d\geq 2$.
	
	Thus the Fourier transform of $\phi_i$ satisfies for all $\abs{\alpha}=d+1$,
	\begin{equation*}
	\abs{k^\alpha \hat\phi_i(k)}
	\sim
	\Babs{\int \partial_z^\alpha\phi_i(z) e^{-i2\pi k\cdot z}\ud z} \\
	\leq\int\abs{\partial_z^\alpha\phi_i(z)}\ud z<\infty,
	\end{equation*}
	and hence $\abs{\hat\phi_i(k)}\lesssim\abs{k}^{-1-d}$. If $\Phi_i$ is the $1$-periodic extension of $\phi_i$,  its Fourier coefficients satisfy this same estimate.
	In particular, these Fourier coefficient are in $\ell^1(\Z^d)$. Recalling that $\phi_i(z)$ agrees with $z_i\abs{z}^{d-1}$ in $Q(0,\frac14)$, we hence have shown that
	\begin{equation}\label{eq:FourierExp}
	z_i\abs{z}^{d-1}=\sum_{k\in\Z^d} a_i(k)e^{i2\pi k\cdot z},\qquad\forall z\in Q(0,\frac14),
	\end{equation}
	where $\sum_{k\in\Z^d}\abs{a_i(k)}<\infty$.
	
	And observe that we only need to apply the formula \eqref{eq:formula} when $x,y\in Q$, a given cube. By composing back and forth with dilations in addition to translations, we may assume that $Q=Q(0,\tfrac18)$. Then if $x,y\in Q$, we see that $x-y\in Q(0,\tfrac14)$, where \eqref{eq:FourierExp} is valid. Substituting \eqref{eq:FourierExp} with $z=x-y$ into \eqref{eq:formula}, we obtain
	\begin{equation*}
	1=\sum_{i=1}^d\frac{(x_i-y_i)}{\abs{x-y}^{d+1}}\sum_{k\in\Z^d}a_i(k)e^{i2\pi k\cdot(x-y)}
	=\sum_{k\in\Z^d}a_i(k)\sum_{i=1}^d\frac{(x_i-y_i)}{\abs{x-y}^{d+1}}e^{i2\pi k\cdot x} e^{-i2\pi k\cdot y},
	\end{equation*}
	which is a convergent series of expressions of the desired form, namely the Riesz transform kernel multiplied by (bounded) functions that depend only on $x$ or only on $y$. After this, the argument can be concluded in the same way as before.
	
	This Fourier series idea is based on Svante Janson \cite{JANS}.
\end{proof}

We gather two more basic estimates.
\begin{lemma}\label{magiclemma} Let $Q$ be a cube and $b_i\in  L^3_{loc}, i =1,2,$ be such that $\dashint_Qb_i=0$. Then
	\begin{equation}\label{ml1}
	\Bnorm{\avgint{\avgint{(b_1(x)-b_1(y))(b_2(x)-b_2(y))\overline{b_1(x)}\overline{b_2(y)} \ud y \ud x}{Q}}{Q}}{} \geq \avgint{\abs{b_1}^2}{Q}\avgint{\abs{b_2}^2}{Q}
	\end{equation}
	and
	\begin{equation}\label{ml2}
	\avgint{\avgint{(b_1(x)-b_1(y))(b_2(x)-b_2(y))\overline{b_1(x)}\overline{b_2(x)}) \ud y \ud x}{Q}}{Q} = \dashint_Q\abs{b_1b_2}^2 + \Bnorm{\dashint_Qb_1b_2}{}^2,
	\end{equation}
	where we have replaced the latter occurrance of $b_2(y)$ with $b_2(x).$
\end{lemma}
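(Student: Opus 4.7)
The plan is that both identities follow by expanding the product
\[
(b_1(x)-b_1(y))(b_2(x)-b_2(y)) = b_1(x)b_2(x) - b_1(x)b_2(y) - b_1(y)b_2(x) + b_1(y)b_2(y)
\]
and computing a two-step integration in which the zero-mean hypothesis $\langle b_i\rangle_Q=0$ kills most of the resulting cross terms. In both cases I would do the $y$-integration first, since the four-term expansion separates cleanly into products of a function of $x$ and a function of $y$.

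For \eqref{ml2} the integration in $y$ over $Q$ reduces the middle two summands to $-b_1(x)\langle b_2\rangle_Q - b_2(x)\langle b_1\rangle_Q=0$, leaving $\dashint_Q (b_1(x)-b_1(y))(b_2(x)-b_2(y))\ud y = b_1(x)b_2(x) + \langle b_1b_2\rangle_Q$. Multiplying by $\overline{b_1(x)b_2(x)}$ and averaging over $x\in Q$ then yields $\langle|b_1b_2|^2\rangle_Q$ from the first piece and $\langle b_1b_2\rangle_Q\,\overline{\langle b_1b_2\rangle_Q}=|\langle b_1b_2\rangle_Q|^2$ from the second, which is exactly the claimed identity.

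For \eqref{ml1} the same inner integration, now carrying the factor $\overline{b_2(y)}$, leaves
\[
-b_1(x)\langle|b_2|^2\rangle_Q - b_2(x)\langle b_1\overline{b_2}\rangle_Q + \langle b_1|b_2|^2\rangle_Q,
\]
the first surviving term coming from the mixed piece $-b_1(x)b_2(y)\overline{b_2(y)}$, the second from $-b_1(y)b_2(x)\overline{b_2(y)}$, and the last from $b_1(y)b_2(y)\overline{b_2(y)}$; the piece $b_1(x)b_2(x)\overline{b_2(y)}$ vanishes because $\langle b_2\rangle_Q=0$. Multiplying by $\overline{b_1(x)}$ and averaging over $x$, the last term drops by $\langle b_1\rangle_Q=0$, and what remains is $-\langle|b_1|^2\rangle_Q\langle|b_2|^2\rangle_Q - |\langle b_1\overline{b_2}\rangle_Q|^2$. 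This is a non-positive real, so taking the modulus gives the sum of the two non-negative quantities, which in particular dominates $\langle|b_1|^2\rangle_Q\langle|b_2|^2\rangle_Q$.

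There is no real obstacle here beyond careful bookkeeping; performing the $y$-integral first is the key simplification, because it immediately collapses the four-term expansion down to the handful of terms that actually survive the zero-mean condition, without ever requiring the full double expansion into sixteen pieces.
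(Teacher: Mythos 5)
Your proof is correct and takes essentially the same route as the paper's: both proceed by expanding $(b_1(x)-b_1(y))(b_2(x)-b_2(y))$ into four separable terms, integrating them out, and applying $\langle b_i\rangle_Q=0$ to kill the cross terms; you merely organize the bookkeeping as a $y$-then-$x$ iterated integral, which is a cosmetic difference. The surviving quantity $-\langle|b_1|^2\rangle_Q\langle|b_2|^2\rangle_Q-|\langle b_1\overline{b_2}\rangle_Q|^2$ and the final identity for \eqref{ml2} agree exactly with the paper's computation.
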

\begin{proof}
	Multiplying out shows that
	\begin{equation*}
	\begin{split}
	&\dashint_Q\dashint_Q(b_1(x)-b_1(y))(b_2(x)-b_2(y))\overline{b_1(x)}\overline{b_2(y)}\ud y \ud x \\
	&= \dashint_Qb_1(x)b_2(x)\overline{b_1(x)}\ud x \dashint_Q\overline{b_2(y)}\ud y - \dashint_Q b_1(x)\overline{b_1(x)}\ud x\dashint_Q b_2(y)\overline{b_2(y)}\ud y\\
	&- \dashint_Q b_2(x)\overline{b_1(x)}\ud x\dashint_Qb_1(y)\overline{b_2(y)}\ud y + \dashint_Q\overline{b_1(x)}\ud x\dashint_Qb_1(y)b_2(y)\overline{b_2(y)}\ud y \\
	&= -\dashint_Q\abs{b_1(x)}^2\ud x \dashint_Q\abs{b_2(y)}^2\ud y - \Bnorm{\dashint_Q b_1(x)\overline{b_2(x)}\ud x}{}^2,
	\end{split}
	\end{equation*}	
	whence
	\begin{equation*}
	\begin{split}
	& \Bnorm{\dashint_Q\dashint_Q(b_1(x)-b_1(y))(b_2(x)-b_2(y)\overline{b_1(x)}\overline{b_2(y)}\ud y \ud x }{}  \\
	&=  \dashint_Q\abs{b_1(x)}^2\ud x \dashint_Q\abs{b_2(y)}^2\ud y + \Bnorm{\dashint_Q b_1(x)\overline{b_2(x)}\ud x}{}^2 \ge  \dashint_Q\abs{b_1(x)}^2\ud x \dashint_Q\abs{b_2(y)}^2\ud y.
	\end{split}
	\end{equation*}
	As for \eqref{ml2} we compute:
	\begin{equation*}
	\begin{split}
	&	\dashint_Q\dashint_Q\left(b_1(x)-b_1(y)\right)\left(b_2(x)-b_2(y)\right)\overline{b_1(x)b_2(x)}\ud y\ud x \\
	&= \dashint_Q\abs{b_1b_2}^2-\dashint_Qb_2\dashint_Q\abs{b_1}^2\overline{b_2}-\dashint_Qb_1\dashint_Q\overline{b_1}\abs{b_2}^2+\dashint_Qb_1b_2\dashint_Q\overline{b_1b_2} \\
	&= \dashint_Q\abs{b_1b_2}^2 + \Bnorm{\dashint_Qb_1b_2}{}^2.
	\end{split}
	\end{equation*}
\end{proof}
The lower bounds now follow by combining lemmas \ref{rieszlemma} and \ref{magiclemma}.
\begin{theorem}\label{lowerbound}
	Let $R_j$, $j= 1,\dots,d,$ be the Riesz transforms, $b_1b_2\in L^2_{loc},$ $b_1,b_2\in L^3_{loc}.$ Then
	\begin{equation*}
	S_2(b_1,b_2)+T_2(b_1,b_2) \leq C_d\sum_{j=1}^{d}\bNorm{C_bR_j}{L^2\to L^2}.
	\end{equation*}
\end{theorem}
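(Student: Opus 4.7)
The plan is to fix an arbitrary cube $Q$, pass to the mean-zero shifts $\tilde b_i := b_i - \langle b_i\rangle_Q$ (which do not affect $C_b R_j$ since translating $b_i$ by a constant does not alter $[b_i,T]$, and which also leave $S_2(b_1,b_2)$ and $T_2(b_1,b_2)$ unchanged), and then apply Lemma \ref{rieszlemma} with $p=2$ for two different choices of the test pair $(f_1,f_2)$, combining each time with the corresponding identity from Lemma \ref{magiclemma} to bound the bilinear form below.

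\textbf{Step 1 (the $S_2$ bound).} Choose $f_1 = \overline{\tilde b_1}\,1_Q$ and $f_2 = \overline{\tilde b_2}\,1_Q$, interpreted through a truncation $f_i^N = f_i \,1_{\{|f_i|\le N\}}\in L^\infty_c$ to which Lemma \ref{rieszlemma} applies literally. Passing $N\to\infty$ via dominated convergence (all integrands are dominated by products of four factors drawn from $\tilde b_1,\tilde b_2\in L^3_{\loc}$, hence integrable over $Q\times Q$ by H\"older) yields
\[
\Bnorm{\avgint{\avgint{(\tilde b_1(x)-\tilde b_1(y))(\tilde b_2(x)-\tilde b_2(y))\overline{\tilde b_1(x)}\,\overline{\tilde b_2(y)}\ud y\ud x}{Q}}{Q}}{}
\leq C_d \sum_{j=1}^{d}\bNorm{C_b R_j}{L^2\to L^2}\,\langle |\tilde b_1|^2\rangle_Q^{1/2}\langle |\tilde b_2|^2\rangle_Q^{1/2}.
\]
Since $\langle\tilde b_i\rangle_Q=0$, \eqref{ml1} bounds the left-hand side below by $\langle|\tilde b_1|^2\rangle_Q\langle|\tilde b_2|^2\rangle_Q$. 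Dividing by the common factor $\langle |\tilde b_1|^2\rangle_Q^{1/2}\langle |\tilde b_2|^2\rangle_Q^{1/2}$ and taking the supremum over $Q$ gives $S_2(b_1,b_2)\le C_d\sum_j \|C_bR_j\|_{L^2\to L^2}$.

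\textbf{Step 2 (the $T_2$ bound).} Now take $f_1 = \overline{\tilde b_1\tilde b_2}\,1_Q$ and $f_2 \equiv 1_Q$, again via a truncation argument using $b_1b_2\in L^2_{\loc}$. Lemma \ref{rieszlemma} with $p=2$ gives
\[
\Bnorm{\avgint{\avgint{(\tilde b_1(x)-\tilde b_1(y))(\tilde b_2(x)-\tilde b_2(y))\overline{\tilde b_1(x)\tilde b_2(x)}\ud y\ud x}{Q}}{Q}}{}
\leq C_d \sum_{j=1}^{d}\bNorm{C_b R_j}{L^2\to L^2}\,\langle |\tilde b_1\tilde b_2|^2\rangle_Q^{1/2},
\]
while \eqref{ml2} rewrites the left-hand side as $\langle|\tilde b_1\tilde b_2|^2\rangle_Q + |\langle \tilde b_1\tilde b_2\rangle_Q|^2 \ge \langle|\tilde b_1\tilde b_2|^2\rangle_Q$. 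Dividing and taking the supremum over $Q$ produces $T_2(b_1,b_2)\le C_d\sum_j \|C_bR_j\|_{L^2\to L^2}$. Adding the two bounds concludes the theorem.

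\textbf{Expected obstacle.} Nothing here is conceptually delicate once Lemmas \ref{rieszlemma} and \ref{magiclemma} are in hand; the entire content of the theorem is encoded in those two results, and the proof is just an algebraic matching of the two sides. The only genuine technicality is the passage from the $L^\infty_c$ test functions demanded by Lemma \ref{rieszlemma} to the unbounded choices $\overline{\tilde b_i}$ and $\overline{\tilde b_1\tilde b_2}$ that we actually want to plug in, but the assumed exponents $b_i\in L^3_{\loc}$ and $b_1b_2\in L^2_{\loc}$ are tailored precisely so that the dominated convergence argument goes through on both sides of each inequality.
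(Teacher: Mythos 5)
Your proof is correct and takes essentially the same route as the paper's: both pass to $\psi_i = b_i - \langle b_i\rangle_Q$, apply Lemma~\ref{rieszlemma} with $p=2$ and the test pairs $(f_1,f_2)=(\overline{\psi_1}1_Q,\overline{\psi_2}1_Q)$ resp.\ $(\overline{\psi_1\psi_2}1_Q,1_Q)$, and close with the lower bounds \eqref{ml1} and \eqref{ml2}. The only difference is that you explicitly spell out the truncation/dominated-convergence step needed to apply Lemma~\ref{rieszlemma}, whose statement requires $f_1,f_2\in L^\infty_c$, to the unbounded test functions $\overline{\psi_i}$ and $\overline{\psi_1\psi_2}$; the paper leaves this routine technicality implicit, and your treatment of it (dominating the expanded integrand over $Q\times Q$ via H\"older with $b_i\in L^3_{\loc}$ and $b_1b_2\in L^2_{\loc}$) is exactly what the chosen local-integrability hypotheses in the theorem are designed to support.
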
	
\begin{proof} Denote $\psi_i = b_i-\langle b_i\rangle_Q,$ $i=1,2.$ Then $\int_Q\psi_i$ = 0 and  the assumptions of Lemma \ref{magiclemma} are satisfied by which by \eqref{ml1} and lemma \ref{rieszlemma} we get the necessary condition $S_2$
	\begin{equation*}
	\begin{split}
	\avgint{\abs{\psi_1(x)}^2\ud x}{Q}\avgint{\abs{\psi_2(y)}^2\ud y}{Q} &\leq \Bnorm{\avgint{\avgint{(\psi_1(x)-\psi_1(y))(\psi_2(x)-\psi_2(y))\overline{\psi_2(y)}\overline{\psi_1(x)} \ud y \ud x}{Q} {}}{Q}}{}  \\
	&\leq C_d\sum_{i=1}^{k}\bNorm{C_bR_i}{L^2\to L^2} \left( \avgint{\abs{\psi_1}^2}{Q} \right) ^{1/2}\left( \avgint{\abs{\psi_2}^{2}}{Q} \right) ^{1/2}.
	\end{split}
	\end{equation*}
	For the condition $T_2$, we apply lemma \ref{rieszlemma} with $f_2 = 1_Q,$ $f_1 = \overline{\psi_1\psi_2}$ and lemma \ref{magiclemma} by \eqref{ml2} with $b_i = \psi_i$ to attain
	\begin{equation*}
	\begin{split}
	&\dashint_Q\abs{\psi_1\psi_2}^2 \leq \Bnorm{\dashint_Q\dashint_Q\left(b_1(x)-b_1(y)\right)\left(b_2(x)-b_2(y)\right) \overline{\psi_1(x)\psi_2(x)} 1_Q(y)  \ud y\ud x}{} \\
	&\leq C_d\sum_{i=1}^{k}\bNorm{C_bR_i}{L^p\to L^p} \left( \avgint{\abs{\psi_1\psi_2}^2}{Q} \right) ^{1/2}\left( \avgint{\abs{f_2}^{2}}{Q} \right) ^{1/2} = C_d\sum_{i=1}^{k}\bNorm{C_bR_i}{L^p\to L^p} \left( \avgint{\abs{\psi_1\psi_2}^2}{Q} \right) ^{1/2}.
	\end{split}
	\end{equation*}
	Dividing out equal factors and summing gives the claim.
\end{proof}

\section{Sufficient conditions}
In this section we specify $T$ to be a  Calder\'on-Zygmund operator satisfying the Dini condition. We begin with partially recalling, with only minor modifications, a sparse domination of $T$ from Lerner \cite{L} (see also \cite{LO19})  and its commutators from Ib\'anez-Firnkorn--Rivera-R\'ios  \cite{IR}. The sparse domination would quickly give the boundedness of the commutator $C_bT$ on $L^2,$  whenever $S_p(b_1,b_2)+T_p(b_1,b_2)<\infty$ for any $p>2.$
However, in the last section we find that this is too strong to characterize the boundedness of $C_bT$ on $L^2$ and hence are motivated to introduce the condition $S_{A,B} + T_C$ involving the Young functions $A,B,C,$ that can be made strictly weaker than $S_p+T_p$ for all $p>2.$ Lastly, we prove the upper bound in Theorem \ref{upperbound} with these updated conditions.

We begin with definitions.
\begin{definition}\label{CZO}
	A $d$-dimensional Calder\'on-Zygmund operator $T$ with an $\omega$-Dini -kernel is a $L^2(\R^d)\to L^2(\R^d)$ bounded operator with the representation
	$$
	Tf(x) = \int K(x,y)f(y)\ud y, \qquad x\notin \supp(f),
	$$
	with the kernel $K:\R^d\times\R^d\setminus\{(x,x) : x\in\R^d\}\rightarrow\C$ satisfying the size condition $\abs{K(x,y)} \leq C\abs{x-y}^{-d}$
	and the regularity condition
	\begin{equation*}
	 \abs{K(x,y)-K(x',y)}+\abs{K(y,x)-K(y,x')} \leq \frac{C'}{\abs{x-y}^d}\omega\left(\frac{\abs{x-x'}}{\abs{x-y}}\right),
	\end{equation*}
	whenever $\abs{x-x'}\leq\frac{1}{2}\abs{x-y},$
	with the modulus of continuity $\omega\colon[0,1]\to\R_+$ that is continuous, increasing, subadditive, satisfies $\omega(0)=0$
	and $\Norm{w}{\rm{Dini}} :=
	\int_0^1\omega(t)\frac{dt}{t}<\infty.$
\end{definition}

\begin{definition} Given a $\gamma\in(0,1),$ we say that a collection of sets $\mathcal{F}$ is $\gamma$-sparse, if for all distinct elements $S,R\in\mathcal{F},$
	there exist sets $E_S\subset S,E_R\subset R$ such that $E_S\cap E_R=\emptyset$ and $\abs{E_S}>\gamma\abs{S}.$
\end{definition}

\begin{definition}  Let $T$ be as in definition \ref{CZO}. We have the following maximal operators on $L^2(\R^d):$
	\begin{enumerate} [i)]
	\item the maximal operator  $T_*f(x)=\sup_{\varepsilon>0}\bnorm{Tf1_{B(x,\varepsilon)^c}(x)}{},$
	\item the grand maximal operator  $\mathcal{M}_T(f)(x) = \sup_{Q\ni x}\esssup_{\xi\in Q}\bnorm{T(f1_{\R^d\setminus3Q})(\xi)}{},$
	\item and its localized version  $\mathcal{M}_{T,Q}(f)(x) = \sup_{Q\supset P\ni x}\esssup_{\xi\in P}\bnorm{T(f1_{3Q\setminus3P})(\xi)}{},$
		where $Q,P$ are cubes.
	\end{enumerate}
\end{definition}
The control over the grand maximal operator is given by
\begin{lemma}\cite[Lemma 3.2]{L}\label{keylemma} Let $f\in L^2_{loc}.$ The following pointwise estimates hold:
	\begin{enumerate}[i)]
	\item for a.e. $x\in Q$ we have:  $\bnorm{T(f1_{3Q}(x))}{} \leq C_d\bNorm{T}{L^1\to L^{1,\infty}}\abs{f(x)}+ \mathcal{M}_{T,Q}f(x),$
	\item for all $x\in\R^d$ we have: $\mathcal{M}_Tf(x) \leq C_d(\bNorm{\omega}{{\rm{Dini}}}+C_T) \mathcal{M}f(x)+T_*f(x).$
	\end{enumerate}
\end{lemma}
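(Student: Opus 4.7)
Fix $x \in \R^d$ and a cube $Q \ni x$. For a.e.\ $\xi \in Q$, I would split
\begin{equation*}
T(f 1_{\R^d \setminus 3Q})(\xi) = T(f 1_{B(x,r)^c})(x) + T(f 1_{B(x,r) \setminus 3Q})(x) + \bigl[T(f 1_{\R^d \setminus 3Q})(\xi) - T(f 1_{\R^d \setminus 3Q})(x)\bigr],
\end{equation*}
choosing $r \sim \ell(Q)$ so that $B(x,r) \supset 3Q$. The first term is $\leq T_*f(x)$ by definition; the second, supported where $|x-y| \sim \ell(Q)$, is $\lesssim C_T\,\mathcal{M}f(x)$ by the size bound on $K$; and the third is handled by the standard dyadic annular decomposition outside a large dilate of $Q$, where the $\omega$-Dini regularity of $K$ yields a convergent sum $\lesssim \|\omega\|_{\mathrm{Dini}}\,\mathcal{M}f(x)$. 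Taking $\operatorname{ess\,sup}_{\xi \in Q}$ and then $\sup_{Q \ni x}$ finishes (ii).

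\textbf{Plan for part (i).} For a small subcube $P$ of $Q$ containing $x$, decompose $f 1_{3Q} = f 1_{3P} + f 1_{3Q \setminus 3P}$. The weak-$(1,1)$ bound on $T$ gives
\begin{equation*}
\bigl|\bigl\{\xi \in P : |T(f 1_{3P})(\xi)| > 2 \cdot 3^d \|T\|_{L^1 \to L^{1,\infty}} \langle |f|\rangle_{3P}\bigr\}\bigr| \leq |P|/2,
\end{equation*}
while the definition of $\mathcal{M}_{T,Q}$ bounds $|T(f 1_{3Q \setminus 3P})(\xi)|$ by $\mathcal{M}_{T,Q}f(x)$ for a.e.\ $\xi \in P$. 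Intersecting produces a subset $B_P \subset P$ with $|B_P| \geq |P|/2$ on which $|T(f 1_{3Q})(\xi)| \leq A_P := 2 \cdot 3^d \|T\|_{L^1 \to L^{1,\infty}} \langle|f|\rangle_{3P} + \mathcal{M}_{T,Q}f(x)$.

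The delicate step is to transfer this majority-measure estimate on $P$ to a pointwise bound at $x$. Since $f \in L^2_{\loc}$ and $T : L^2 \to L^2$, the function $T(f 1_{3Q})$ lies in $L^1_{\loc}$, so a.e.\ $x \in Q$ is both a Lebesgue point of $|f|$ and a Lebesgue density point of level sets. I would argue by contradiction: if $|T(f 1_{3Q})(x)|$ strictly exceeds $\liminf_{P \to x} A_P$, then $x$ would be a density point of the corresponding exceptional set, contradicting the fact that its complement contains $B_P$ with density at least $1/2$ along a suitable shrinking subsequence. Since $\langle|f|\rangle_{3P} \to |f(x)|$ at Lebesgue points of $|f|$, this gives $|T(f 1_{3Q})(x)| \leq 2 \cdot 3^d \|T\|_{L^1 \to L^{1,\infty}} |f(x)| + \mathcal{M}_{T,Q}f(x)$, which is (i) with $C_d = 2\cdot 3^d$. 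I expect this final density-point transfer to be the main obstacle; the Calder\'on--Zygmund manipulations upstream, and the kernel estimates in (ii), are routine.
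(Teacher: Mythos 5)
The paper does not prove this lemma; it is quoted verbatim from Lerner's \emph{On Pointwise Estimates Involving Sparse Operators}, and your plan reproduces Lerner's original argument in both parts, including the decomposition $f1_{3Q}=f1_{3P}+f1_{3Q\setminus3P}$ plus the transfer to the point $x$ via Lebesgue/approximate-continuity points in (i), and the maximal-truncation/size/regularity split in (ii). The density-point transfer you flag as ``the main obstacle'' is indeed what makes (i) work and your contradiction argument is the standard way to run it.

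One technical wrinkle in your plan for (ii): the third term in your decomposition is the difference $T(f1_{\R^d\setminus 3Q})(\xi)-T(f1_{\R^d\setminus 3Q})(x)$, and you propose to bound it entirely by the $\omega$-Dini regularity. But the regularity hypothesis only applies when $\abs{x-\xi}\leq\tfrac12\abs{x-y}$, and for $y$ in the inner part of $\R^d\setminus 3Q$ (roughly $\abs{x-y}\lesssim\diam Q$, while $\abs{x-\xi}$ can be as large as $\diam Q$) this fails. You need to peel off the annulus $B(x,r)\setminus 3Q$ from the difference as well and control that piece by the size bound at both $x$ and $\xi$, which yields an extra $C_T\,\mathcal{M}f(x)$ that is absorbed into the stated constant. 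Equivalently, and more cleanly (this is what Lerner does), restructure the split as
\[
T(f1_{\R^d\setminus 3Q})(\xi)=T(f1_{B(x,r)^c})(x)+\bigl[T(f1_{B(x,r)^c})(\xi)-T(f1_{B(x,r)^c})(x)\bigr]+T(f1_{B(x,r)\setminus 3Q})(\xi),
\]
with $r\sim\diam Q$ chosen so that $B(x,r)\supset 3Q$; then the middle difference only sees $y\notin B(x,r)$, where Dini applies, the first term is $\leq T_*f(x)$, and the last is $\lesssim C_T\,\mathcal{M}f(x)$ by the size bound alone. With that adjustment your proof is complete and coincides with the cited one.
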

For a more refined argument for the sparse domination in Theorem \ref{sparsedom} without Lemma \ref{keylemma}, see the latest version of the sparse domination principle in Lerner, Ombrosi \cite{LO19}.

\begin{theorem}\label{sparsedom} Let $T$ be a d-dimensional Calder\'on-Zygmund operator with a Dini kernel and denote $b(x,y) = (b_1(x)-b_1(y)) (b_2(x)-b_2(y)).$ We assume that $f\in L^1_c(\R^d),$ and further to make everything well-defined that $b_1,b_2,b_1b_2,b_1f,b_2f,b_1b_2f\in L^1_{loc}.$

	From these assumptions it follows that there exists a sparse collection $S$ of cubes on $\R^d$ such that
	\begin{equation*}
		\Bnorm{C_bTf(x)}{} \leq C_{T,d} \sum_{i=1}^{4}S_if(x),
	\end{equation*}
	where
	\begin{equation*}
	S_1f = \sum_{Q\in S} \abs{b_1-\langle b_1\rangle_Q} \abs{b_2-\langle b_2\rangle_Q}\big\langle\abs{f}\big\rangle_Q1_Q, \qquad
	S_2f = \sum_{Q\in S} \abs{b_2-\langle b_2\rangle_Q}\big\langle\abs{b_1-\langle b_1\rangle_Q}\abs{f}\big\rangle_Q1_Q,
	\end{equation*}
	\begin{equation*}
	S_3f = \sum_{Q\in S} \abs{b_1-\langle b_1\rangle_Q}\big\langle\abs{b_2-\langle b_2\rangle_Q}\abs{f}\big\rangle_Q1_Q,\qquad
	S_4f = \sum_{Q\in S} \big\langle\abs{b_1-\langle b_1\rangle_Q}\abs{b_2-\langle b_2\rangle_Q}\abs{f}\big\rangle_Q1_Q,
	\end{equation*}
	and the sparse constant denoted with $\gamma$ depends only on the dimension $d.$
\end{theorem}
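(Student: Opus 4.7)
We follow the Lerner-type sparse domination recipe of \cite{L}, as adapted for first-order commutators in \cite{IR}. The starting point is the algebraic identity, valid for any cube $Q$ and any $x \in \R^d$, with the notation $\wt{b_i}^Q := b_i - \langle b_i\rangle_Q$:
\begin{equation*}
C_bTf(x) = T(\wt{b_1}^Q \wt{b_2}^Q f)(x) - \wt{b_1}^Q(x)\, T(\wt{b_2}^Q f)(x) - \wt{b_2}^Q(x)\, T(\wt{b_1}^Q f)(x) + \wt{b_1}^Q(x)\wt{b_2}^Q(x)\, Tf(x).
\end{equation*}
This is verified either by expanding $C_bTf = b_1 b_2 Tf - b_2 T(b_1 f) - b_1 T(b_2 f) + T(b_1 b_2 f)$ and observing that the contributions of the constants $\langle b_i\rangle_Q$ cancel, or from the pointwise kernel identity $(b_1(x)-b_1(y))(b_2(x)-b_2(y)) = (\wt{b_1}^Q(x)-\wt{b_1}^Q(y))(\wt{b_2}^Q(x)-\wt{b_2}^Q(y))$. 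The four terms on the right correspond, in order, to the four sparse forms $S_4, S_3, S_2, S_1$ of the conclusion.

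Next we carry out the iterative Calder\'on--Zygmund stopping-time construction. Fix a cube $Q_0$ with $\supp(f) \subset Q_0$ and apply the identity above with $Q = Q_0$; for $x \in Q_0$, split $f = f 1_{3Q_0} + f 1_{\R^d \setminus 3Q_0}$. The pieces with $f 1_{\R^d \setminus 3Q_0}$ are controlled at $x \in Q_0$ by Lemma \ref{keylemma}(ii) via $\calM_T$ applied to the relevant products, which are in turn dominated (up to $T_*$ and the Hardy--Littlewood maximal function) by their $Q_0$-averages. The pieces with $f 1_{3Q_0}$ are controlled by Lemma \ref{keylemma}(i), which yields $\abs{T(g 1_{3Q_0})(x)} \lesssim \abs{g(x)} + \calM_{T,Q_0} g(x)$ applied to $g \in \{\wt{b_1}^{Q_0}\wt{b_2}^{Q_0} f,\ \wt{b_1}^{Q_0} f,\ \wt{b_2}^{Q_0} f,\ f\}$. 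Let $E \subset Q_0$ be the set where any of the four maxima $\calM_{T,Q_0} g$ exceeds a large multiple $C$ of its $Q_0$-average; the weak-$L^1$ bounds of $\calM$ and $T$ give $|E| \leq \tfrac12 |Q_0|$ for $C$ large. A Calder\'on--Zygmund decomposition produces pairwise disjoint dyadic subcubes $\{Q_j\} \subset \calD(Q_0)$ with $\sum_j |Q_j| \leq \tfrac12|Q_0|$ and $E \subset \bigcup_j Q_j$ up to null sets; set $E_{Q_0} := Q_0 \setminus \bigcup_j Q_j$, so that $|E_{Q_0}| \geq \tfrac12|Q_0|$.

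On $E_{Q_0}$ each of the four maxima is at most $C$ times its $Q_0$-average. Combined with the algebraic identity and the pointwise multipliers $\wt{b_i}^{Q_0}(x)$ standing outside, this reproduces exactly the $Q_0$-summand of $S_1 f(x) + S_2 f(x) + S_3 f(x) + S_4 f(x)$. Iterating the construction on each $Q_j$ and collecting all cubes over all generations produces a family $\mathcal{S}$ with pairwise disjoint major sets $E_S$ satisfying $|E_S| \geq \tfrac12|S|$, i.e.\ a $\tfrac12$-sparse family, together with the desired pointwise domination. The main technical point is the bookkeeping: one must verify that each of the four algebraic summands, after Lemma \ref{keylemma}, produces precisely one of the four sparse forms $S_i$ with the oscillation factors distributed correctly between the outside multiplier and the inside average. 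The hypotheses $b_i f, b_1 b_2 f \in L^1_{\loc}$ enter here, to guarantee that all products $\wt{b_i}^{Q_0}\wt{b_j}^{Q_0} f$ are locally integrable and the weak-$L^1$ stopping estimates are meaningful.
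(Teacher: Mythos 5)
Your proposal is correct and follows essentially the same route as the paper: the algebraic identity
\begin{equation*}
C_bTf = T(\wt{b_1}\wt{b_2}f) - \wt{b_1}T(\wt{b_2}f) - \wt{b_2}T(\wt{b_1}f) + \wt{b_1}\wt{b_2}Tf
\end{equation*}
with cube-adapted oscillations, control of the local piece by Lemma~\ref{keylemma}(i) and of the nonlocal piece by $\calM_{T,Q}$ together with Lemma~\ref{keylemma}(ii), a weak-$(1,1)$ small exceptional set, a Calder\'on--Zygmund decomposition of it, and iteration to build a $\gamma$-sparse family. The one point where you deviate, cosmetically, from the paper's write-up is in the choice of functions generating the exceptional set: the paper takes $\psi\in\{f,b_1f,b_2f,b_1b_2f\}$ (the $Q$-independent products) and invokes the constant-invariance of $C_bT$ only in the final pointwise estimate, whereas you build $E$ directly from the cube-adapted functions $\{f,\wt{b_1}^{Q_0}f,\wt{b_2}^{Q_0}f,\wt{b_1}^{Q_0}\wt{b_2}^{Q_0}f\}$; your version matches the averages that actually appear in the sparse forms $S_i$ more transparently, and both are equivalent up to bookkeeping. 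Two small points worth tightening if you were to write this out in full: (1) the exceptional set must also include, for each $g$, the level set $\{|g(x)|>\alpha\langle|g|\rangle_{3Q_0}\}$ (you tacitly fold this into $\calM$, but it should be stated, since Lemma~\ref{keylemma}(i) produces the term $|g(x)|$ alongside $\calM_{T,Q_0}g(x)$); and (2) retain the property $P\cap E^c\neq\emptyset$ for each selected cube $P$, which the paper records and which is what makes the middle term $\sum_{P}(C_bT(f1_{3Q_0\setminus 3P}))1_P$ controllable by the $3Q_0$-averages before iterating on the last term.
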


\begin{proof}
	We recall only the part of the proof where the exceptional set is defined and control over the appearing terms is established. In addition, a comment is made about the rest of the proof, the details for which we refer the reader to the proof of Theorem 1.1 in \cite{LOR1} or \cite{L}.
	
	For an arbitrary integrable function $\psi\not=0$ on $Q$ define
	\begin{equation*}
	E_1(\psi) = \{ x\in Q\colon \abs{\psi(x)} > \alpha \langle\abs{\psi}\rangle_{3Q} \},\qquad
	E_2(\psi) = \{x\in Q\colon \mathcal{M}_{T,Q}\psi(x) > \alpha \langle\abs{\psi}\rangle_{3Q} \}
	\end{equation*}
	and let the exceptional set be
	\begin{equation*}
	E = \bigcup_{i=1,2} E_i(f)\cup E_i(b_1f) \cup E_i(b_2f)\cup E_i(b_1b_2f).
	\end{equation*}
	Since the localized version of the grand maximal operator is controlled with the non-localized by
	\begin{equation*}
	\mathcal{M}_{T,Q}f \leq \mathcal{M}_T(f1_{3Q}),
	\end{equation*}
	and by the well-known facts that $\mathcal{M},T_*\colon L^1\to L^{1,\infty}$ boundedly,
 	it follows from the weak $(1,1)$ bounds implied by ii) of Lemma \ref{keylemma} in conjunction with the local integrability of all functions in question that we may choose some $\alpha>0$ independent of the cube $Q$ so that $\abs{E}\leq 2^{-(d+2)}\abs{Q}$.

	Taking a Calder\'on-Zygmund decomposition of the function $1_E$ at the height $2^{-(d+1)}$ yields a collection $\mathcal{F}$ of cubes satisfying:
\begin{equation*}
		\sum_{P\in\mathcal{F}}\abs{P} \leq\frac{1}{2}\abs{Q}, \quad\abs{E\setminus\bigcup_{P\in\mathcal{F}}P}=0\quad\mbox{ and }\quad P\cap E^c\not=\emptyset\qquad\forall P\in\mathcal{F}.
\end{equation*}
	Then one decomposes
	\begin{equation*}
		\left( C_bT(f1_{3Q})\right)1_Q = 	\left( C_bT(f1_{3Q})\right)1_{Q\setminus\cup P} + \sum_{P\in\mathcal{F}}\left( C_bT(f1_{3Q\setminus 3P})\right)1_P + \sum_{P\in\mathcal{F}}\left( C_bT(f1_{3P})\right)1_P
	\end{equation*}
	and uses the properties of the collection $\mathcal{F},$ Lemma \ref{keylemma} and that the commutator is unchanged modulo constants in the functions $b_1,b_2$ to derive
	\begin{equation*}
	\begin{split}
		\Bnorm{C_bT(f1_{3Q})}{} 1_Q&\leq C_{T,d}\Big( \abs{b_2-\langle b_2\rangle_{3Q}}\abs{b_1-\langle b_1\rangle_{3Q}}\langle\abs{f}\rangle_{3Q} \\
		&+ \abs{b_2-\langle b_2\rangle_{3Q}}\langle\abs{b_1-\langle b_1\rangle_{3Q}}\abs{f}\rangle_{3Q} \\
		&+ \abs{b_1-\langle b_1\rangle_{3Q}}\langle\abs{b_2-\langle b_2\rangle_{3Q}}\abs{f}\rangle_{3Q} \\
		&+ \langle\abs{b_1-\langle b_1\rangle_{3Q}}\abs{b_2-\langle b_2\rangle_{3Q}}\abs{f}\rangle_{3Q} \Big)1_Q + \sum_{P\in\mathcal{F}}\Bnorm{C_bT(f1_{3P})}{}1_P .
			\end{split}
	\end{equation*}
	From this situation one first iterates the above estimate with the last term and then transfers the limit construction from the local to the global.
\end{proof}

Before stating and proving theorem \ref{upperbound} we need to recall and define
\subsection{Young functions, their basic properties and the conditions $S_{A,B},T_C$}

We may also define joint conditions involving Young functions.
A function $A:[0, \infty)\rightarrow [0, \infty)$ is called  a Young function if it is continuous, convex, strictly increasing and satisfies
$$A(0)=0,\qquad\lim_{t\rightarrow \infty} A(t)/t =\infty.$$
Given a Young function $A$, the complementary Young function $\bar A$ is defined by
\[
\bar A(t)= \sup_{s>0} \{st-A(s)\}, \quad t>0.
\]
We also have the maximal function associated with a Young function $A:$
\[
M_Af(x)= \sup_{Q\ni x}\langle |f| \rangle_{A, Q},
\] where the Luxemburg norm is defined by \[
\langle |f |\rangle_{A, Q}= \inf\{\lambda>0: \frac 1{|Q|}\int_Q A(|f|/\lambda)\le 1\}.
\]

We say that $f\in L^A_{\rm{loc}}$ if $\langle |f |\rangle_{A, Q}<\infty$ for all cubes $Q$.
The relative sizes of Young functions $A,B$ are compared with the  symbol $\succeq;$ we say that $B\succeq A,$ if there exist constants $C,t_0>0$ such that $A(t)\leq CB(t),$ when $t>t_0.$
Finally, we define the $B_p$ class: a Young function $A\in B_p$ for $p>1$ if
\[
\int_1^\infty \frac{A(t)}{t^p}\frac{dt}{t}<\infty.
\]
We record the following properties, which can be found at least in \cite[Chapter 5]{CMP10} (see also \cite{P95}):
\begin{proposition}\label{prop:young}
	Given a Young function $A$, it holds that
	\begin{enumerate}[i)]
		\item for any $t\ge 0$, $t \le A^{-1}(t) \bar A^{-1}(t)\le 2t$,
		\item for any cube $Q$,
		\begin{equation}\label{eq:generalholder}
		\langle |f g| \rangle_Q\le 2\langle |f| \rangle_{A, Q} \langle |g| \rangle_{\bar A, Q}.
		\end{equation} More generally, if $A$, $B$, and $C$ are Young functions such that for all $	t\ge t_0>0,$
		\[
		B^{-1}(t) C^{-1}(t)\le c A^{-1}(t),
		\]
		then
		\[
		\langle |fg|\rangle_{A,Q}\lesssim \langle |f|\rangle_{B,Q}\langle |g|\rangle_{C,Q},
		\]
		\item if $B\succeq A,$ then $\langle\abs{f}\rangle_{A,Q}\lesssim \langle\abs{f}\rangle_{B,Q}$ and $M_A \lesssim M_B,$
		\item if $\bar A\in B_{p'}$, then $A(t)\succeq t^p$ and $ \langle |f|^p\rangle_Q^{\frac 1p}\lesssim \langle|f|\rangle_{A, Q}$.
	\end{enumerate}
\end{proposition}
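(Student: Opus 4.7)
The four statements collected in Proposition \ref{prop:young} are classical properties of Young functions and Luxemburg norms, and my plan is to reduce each to the basic pointwise Young inequality
\[
st\le A(s)+\bar A(t),\qquad s,t\ge 0,
\]
which is itself immediate from the defining formula $\bar A(t)=\sup_s\{st-A(s)\}$. The paper itself only \emph{records} these facts (citing \cite{CMP10,P95}), so rather than developing a long theory I plan only to indicate the short derivations one would write out if asked.

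For (i), setting $s=A^{-1}(t)$ in Young's inequality gives $A^{-1}(t)\cdot t\le t+\bar A(t)$, and more usefully, applying Young to the pair $(A^{-1}(t),\bar A^{-1}(t))$ yields $A^{-1}(t)\bar A^{-1}(t)\le A(A^{-1}(t))+\bar A(\bar A^{-1}(t))=2t$, which is the upper bound. For the lower bound I would invoke the involution $\bar{\bar A}=A$: since $A$ is convex with $A(0)=0$, for every $u>0$ the line $s\mapsto s\cdot (u/A^{-1}(u))\cdot t/u$ lies below the graph of $A$ until $s=A^{-1}(u)$, and a direct computation using the definition of $\bar A$ then gives $\bar A(u/A^{-1}(u))\le u$, i.e.\ $\bar A^{-1}(u)\ge u/A^{-1}(u)$, which is exactly $A^{-1}(u)\bar A^{-1}(u)\ge u$. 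For the basic generalized Hölder estimate in (ii), I would normalize so that $\langle |f|\rangle_{A,Q}=\langle|g|\rangle_{\bar A,Q}=1$, apply Young's inequality pointwise, integrate over $Q$, and use $\frac{1}{|Q|}\int_Q A(|f|)\le 1$ (and similarly for $\bar A$) to conclude $\langle|fg|\rangle_Q\le 2$. The three-function version uses the hypothesis $B^{-1}(t)C^{-1}(t)\le cA^{-1}(t)$ together with (i) applied to $B,C$ at the level sets of $|f|,|g|$, or equivalently the substitution $s=B^{-1}(u)$, $t=C^{-1}(u)$ into a suitable Young-type inequality, to convert $B$-/$C$-norm control of $f,g$ into $A$-norm control of $fg$.

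Assertion (iii) is essentially immediate from the Luxemburg definition: if $B\succeq A$, i.e.\ $A(t)\le CB(t)$ for $t\ge t_0$, then for $\lambda>\langle|f|\rangle_{B,Q}$ the integral $|Q|^{-1}\int_Q A(|f|/\lambda)$ is bounded by a constant (splitting the region $|f|/\lambda\le t_0$ and its complement), whence $\langle|f|\rangle_{A,Q}\lesssim \langle|f|\rangle_{B,Q}$, and the pointwise inequality $M_Af(x)\lesssim M_Bf(x)$ follows by taking suprema over cubes containing $x$. For (iv), the condition $\bar A\in B_{p'}$ gives an integral estimate on $\bar A$ that, combined with the duality $t\le A^{-1}(t)\bar A^{-1}(t)$ from (i), forces $A(t)\gtrsim t^p$ for large $t$; the norm inequality $\langle|f|^p\rangle_Q^{1/p}\lesssim\langle|f|\rangle_{A,Q}$ is then the monotonicity statement (iii) applied to the pair $t^p\preceq A(t)$, after verifying that $\langle|f|^p\rangle_Q^{1/p}=\langle|f|\rangle_{t^p,Q}$ up to a constant.

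The only genuinely delicate point I expect is the lower bound in (i), where the conjugate identity $\bar{\bar A}=A$ must be used carefully — the other parts are bookkeeping around Young's inequality and the scaling invariance of the Luxemburg norm. Since the paper uses these facts only as standard tools, I would present the proof in a single short block, or omit it entirely with a reference to \cite[Chapter 5]{CMP10}, as the authors have done.
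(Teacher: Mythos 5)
The paper does not give a proof of this proposition at all: it only records the facts with a citation to \cite[Chapter 5]{CMP10} and \cite{P95}. So there is no in-paper argument to compare against, and your proposal to either give a short derivation or simply cite is consistent with what the authors did. Your sketches are the standard derivations from Young's inequality and are essentially correct.

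One small caution on (i). You write that the line $s\mapsto s\,u/A^{-1}(u)$ (your displayed formula carries a stray factor $t/u$, which I read as a typo) ``lies below the graph of $A$ until $s=A^{-1}(u)$''. This is reversed: by convexity and $A(0)=0$ the chord from the origin to $(A^{-1}(u),u)$ lies \emph{above} the graph of $A$ on $[0,A^{-1}(u)]$, and it is for $s>A^{-1}(u)$ that the graph lies above the line, because $A(s)/s$ is non-decreasing. The bound $\bar A\big(u/A^{-1}(u)\big)\le u$ then follows by splitting the supremum in the definition of $\bar A$: for $0\le s\le A^{-1}(u)$ one only needs $A(s)\ge0$ to get $s\,u/A^{-1}(u)-A(s)\le u$, while for $s>A^{-1}(u)$ the slope monotonicity gives $s\,u/A^{-1}(u)-A(s)\le0$. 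Note also that $\bar{\bar A}=A$ is not needed for this direction; the inequality comes straight from the defining formula for $\bar A$. The sketches for (ii)--(iv) are fine, with the usual understanding that in (iii) one also uses the scaling $A(s/K)\le A(s)/K$ (convexity plus $A(0)=0$) to absorb the constants coming from $C$ and $A(t_0)$ into the Luxemburg norm.
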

\begin{proposition}\cite{P95}\label{PE95} $M_A:L^p\to L^p$ boundedly if and only if $A\in B_p$.
\end{proposition}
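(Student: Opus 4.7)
The plan is to prove the two implications separately via classical test-function and layer-cake arguments, both of which translate the pointwise behavior of $M_A$ into a one-dimensional integral test on $A$ and then match it with the definition of $B_p$.

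For necessity (boundedness of $M_A$ on $L^p$ implies $A\in B_p$), I would test $M_A$ against $f = 1_{Q_0}$ for a fixed unit cube $Q_0$. A direct computation from the definition of the Luxemburg norm gives $\langle 1_{Q_0}\rangle_{A, Q} = 1/A^{-1}(|Q|/|Q_0\cap Q|)$ for cubes $Q\supset Q_0$, and hence the pointwise bound $M_A 1_{Q_0}(x) \gtrsim 1/A^{-1}(c|x|^d)$ whenever $|x|$ is sufficiently large. Demanding $M_A 1_{Q_0} \in L^p(\R^d)$ and passing to radial coordinates followed by the substitution $s = A^{-1}(r)$ reduces the question to the one-dimensional condition $\int_1^\infty A(s) s^{-p}\, ds/s < \infty$, which is exactly $A\in B_p$.

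For sufficiency, I would combine a stopping-time cube selection with the layer-cake formula
\[
\|M_A f\|_{L^p}^p = p\int_0^\infty \lambda^{p-1} |\{M_A f > \lambda\}|\, \ud \lambda.
\]
For each fixed $\lambda$, the level set $\{M_A f > \lambda\}$ is covered by a disjoint family of maximal dyadic cubes $Q_j^\lambda$ with $\langle |f|\rangle_{A, Q_j^\lambda} > \lambda$, and the definition of the Luxemburg norm immediately forces $|Q_j^\lambda| \leq \int_{Q_j^\lambda} A(|f|/\lambda)$. Summing, applying Fubini, and then performing the substitution $t = |f(x)|/\lambda$ in the $\lambda$-integral reduces the $L^p$ bound essentially to the finiteness of $\int_{t_0}^\infty A(t) t^{-p}\, \ud t/t \cdot \|f\|_{L^p}^p$.

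The main obstacle is the behavior of the integrand near $t = 0$, which is \emph{not} controlled by $A\in B_p$ alone. The standard remedy is a truncation: one restricts the $\lambda$-integration to the range $\lambda \geq c|f(x)|$ (i.e.\ $t \leq 1/c$) after using the trivial bound $M_A f \lesssim Mf$, where $M$ is the Hardy--Littlewood maximal operator, on the complementary range. The convexity of $A$ combined with $A(0) = 0$ (so that $A(t)/t$ is non-decreasing and tends to $\infty$ at infinity) is the feature that makes this truncation compatible with the $B_p$ integral and closes the argument.
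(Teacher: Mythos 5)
The paper does not prove this proposition --- it is quoted verbatim from \cite{P95} --- so there is no internal proof to compare against; I therefore assess your sketch on its own. Your necessity argument is sound: the computation $\langle 1_{Q_0}\rangle_{A,Q}=1/A^{-1}(|Q|/|Q_0|)$ is correct, and passing to radial coordinates plus the substitution $s=A^{-1}(r)$ followed by a routine integration by parts does reduce $M_A1_{Q_0}\in L^p$ to $\int_1^\infty A(s)s^{-p}\,ds/s<\infty$.

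The sufficiency sketch, however, has a genuine gap. The pointwise bound $M_Af\lesssim Mf$ that you invoke is \emph{false} for every Young function: by generalized H\"older, Proposition \ref{prop:young}(ii) with $g\equiv 1$, one has $\langle|f|\rangle_Q\le 2\langle|f|\rangle_{A,Q}\langle 1\rangle_{\bar A,Q}$, and $\langle 1\rangle_{\bar A,Q}$ is a finite constant, so the inequality goes the other way, $Mf\lesssim M_Af$. (If your inequality held, $M_A$ would be $L^p$-bounded for every $p>1$ and every Young $A$; taking $A(t)=t^2$ shows this is absurd.) Moreover, the truncation is aimed at the wrong end: under $t=|f(x)|/\lambda$ the non-integrable singularity at $t=0^+$ is reached as $\lambda\to\infty$, so the range you propose to keep, $\lambda\ge c|f(x)|$, i.e.\ $t\le 1/c$, is exactly the bad one. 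The actual fix in P\'erez's proof comes from the stopping condition itself, not from comparison with $M$. Normalize $A(1)=1$. If $\langle|f|\rangle_{A,Q}>\lambda$, then $\frac1{|Q|}\int_Q A(|f|/\lambda)>1$; on $Q\cap\{|f|\le\lambda/2\}$ convexity with $A(0)=0$ gives $A(|f|/\lambda)\le A(1/2)\le 1/2$, so that piece contributes at most $|Q|/2$ and hence
\[
|Q|\lesssim \int_{Q\cap\{|f|>\lambda/2\}} A(|f|/\lambda).
\]
Summing over the stopping cubes yields $|\{M_Af>\lambda\}|\lesssim\int_{\{|f|>\lambda/2\}}A(|f|/\lambda)$, and Fubini in the layer-cake formula then restricts the inner $\lambda$-integral to $\lambda<2|f(x)|$, i.e.\ $t>1/2$; the resulting one-dimensional integral $\int_{1/2}^\infty A(t)t^{-p-1}\,dt$ is finite precisely when $A\in B_p$. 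This built-in truncation replaces your incorrect step; no pointwise domination of $M_A$ by $M$ is available or needed.
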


Now we are ready to give the following definition:

\begin{definition}\label{sufdef}	
	Given Young functions $A, B, C$ such that $\bar B,\bar C\in B_p,$ $ \bar A\in B_{p'}$ and a pair of complex valued functions $b_1\in L^A_{\rm{loc}}(\R^d), b_2\in L^B_{\rm{loc}}(\R^d)$, we say that the joint condition $S_{A,B}$ holds if
	\begin{align*}
	S_{A,B}(b_1, b_2):= \sup_Q \bla |b_1- \langle b_1\rangle_Q|\bra_{ A, Q}\bla |b_2- \langle b_2\rangle_Q|\bra_{ B, Q}<\infty,
	\end{align*}
	and for $b_1^2, b_2^2\in L^{C}_{\rm{loc}}(\R^d)$, we say that the joint condition $T_{C}$ holds if
	\begin{equation}\label{ms}
	T_{C}(b_1, b_2):=  \sup_Q \bla |b_1- \langle b_1\rangle_Q| |b_2- \langle b_2\rangle_Q| \bra_{ C, Q}<\infty.
	\end{equation}
\end{definition}

We remark immediately, that in Theorem \ref{jnce-1} we find a commutator that is unbounded on $L^2$ and that satisfies the conditions $S_2+T_2$ but fails the conditions $S_{A,B} + T_C$ for all Young functions $\bar A,\bar B,\bar C\in B_2.$

\begin{theorem}\label{upperbound} Assume that a pair of functions $b_1\in L^A_{\rm{loc}}(\R^d)$ and $b_2\in L^B_{\rm{loc}}(\R^d)$ with $b_1^2, b_2^2\in  L^{C}_{\rm{loc}}(\R^d)$ satisfy  the conditions $T_C$ and $S_{A,B}$ for some Young functions $A,B,C$ with $\bar A, \bar B, \bar C\in B_2$, then
	\begin{equation*}
	\begin{split}
	S_i\colon L^2(\R^d)\cap L^3_c(\R^d) \longrightarrow L^2(\R^d), \qquad i=1,2,3,4
	\end{split}
	\end{equation*}
	boundedly.
	
	Especially, it follows with a standard density argument by Theorem \ref{sparsedom}  that
	\begin{equation*}
	\begin{split}
	C_bT\colon L^2(\R^d) \longrightarrow L^2(\R^d)
	\end{split}
	\end{equation*}
	boundedly when notation and assumptions are retained.
\end{theorem}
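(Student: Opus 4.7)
The plan is to prove each of the four estimates $\|S_i f\|_{L^2} \lesssim \|f\|_{L^2}$ by duality, pairing with $g \in L^2$, absorbing the oscillation factors of $b_1, b_2$ via the generalized H\"older inequality of Proposition \ref{prop:young}(ii), then invoking the joint conditions $S_{A,B}$ or $T_C$ to reduce to a weighted sum over the sparse family whose $L^2$ control comes from $B_2$-maximal operators via Proposition \ref{PE95}. The final statement about $C_b T$ on $L^2$ will then follow from Theorem \ref{sparsedom} and a standard density argument from $L^2 \cap L^3_c$ to all of $L^2$.

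For each $i$, I would write $\langle S_i f, g\rangle = \sum_{Q \in S} |Q| \, \mathcal A_i(Q)$ where $\mathcal A_i(Q)$ is an average over $Q$ involving the oscillations, $f$, and $g$. For $S_1$ I split $|b_1-\langle b_1\rangle_Q||b_2-\langle b_2\rangle_Q| \cdot g$ by $C/\bar C$-H\"older and bound the oscillation factor by $T_C$; the remaining factor is $\langle g\rangle_{\bar C, Q}$ and $\langle |f|\rangle_Q$. For $S_4$ I split $|b_1-\langle b_1\rangle_Q||b_2-\langle b_2\rangle_Q| \cdot |f|$ by $C/\bar C$-H\"older, again using $T_C$, leaving $\langle |f|\rangle_{\bar C, Q}$ and the plain average $\langle g\rangle_Q$. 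For $S_2$ and $S_3$ I split the two averages separately by $A/\bar A$ and $B/\bar B$, bound the product of oscillations by $S_{A,B}$, and are left with $\langle |f|\rangle_{\bar A, Q}\langle g\rangle_{\bar B, Q}$ (or vice versa).

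Once only functional factors depending on $f$ or $g$ remain, I would invoke sparseness to write $|Q| \le \gamma^{-1}|E_Q|$ with $\{E_Q\}$ pairwise disjoint. Since a Luxemburg average over $Q$ is dominated pointwise on $E_Q \subset Q$ by the corresponding Luxemburg maximal function, e.g.\ $\langle |f|\rangle_{\bar A, Q} \le M_{\bar A} f(x)$ for $x \in E_Q$, each sum is bounded by an integral like $\int M_{\bar A} f \cdot M_{\bar B} g$. Applying Cauchy--Schwarz and the $B_2$ hypothesis on $\bar A, \bar B, \bar C$ together with Proposition \ref{PE95} yields $\|M_{\bar A}f\|_{L^2}\|M_{\bar B}g\|_{L^2} \lesssim \|f\|_{L^2}\|g\|_{L^2}$, and analogously for the other pairings; note the plain maximal $M$ appearing for $S_1, S_4$ is of course bounded on $L^2$.

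The main conceptual obstacle is bookkeeping: matching each of the four sparse forms to the correct splitting of Young functions so that (a) the oscillation side is controlled by either $S_{A,B}$ or $T_C$, and (b) the functional side is left with exactly the pair of Luxemburg indices whose complementary indices lie in $B_2$. The hypothesis $\bar A, \bar B \in B_2$ is used for $S_2, S_3$, and $\bar C \in B_2$ for $S_1, S_4$; each of these is forced by the natural H\"older split and no other configuration works. Once all four bounds are in hand, the extension $C_b T \colon L^2 \to L^2$ follows by combining with Theorem \ref{sparsedom} and approximating $f \in L^2$ by elements of $L^2 \cap L^3_c$ (on which the sparse domination is a priori valid), using the a priori bound to pass to the limit.
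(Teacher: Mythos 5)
Your proposal is correct and matches the paper's proof in all essential respects: you use $T_C$ with a single $C/\bar C$-H\"older split (plus the ordinary Hardy--Littlewood maximal operator on the remaining plain average) for the two-oscillations-on-one-side terms $S_1,S_4$, and $S_{A,B}$ with separate $A/\bar A$- and $B/\bar B$-splits for the mixed terms $S_2,S_3$, then close via sparseness, Orlicz maximal operators and the $B_2$ hypotheses exactly as the paper does. The paper economizes by computing only one representative from each dual pair (and, as written, has a small labeling slip in naming those representatives $S_1$ and $S_3$ while the displayed pairings are $\langle S_3f,\psi\rangle$ and $\langle S_4f,\psi\rangle$), but the underlying computation and the allocation of $S_{A,B}$ versus $T_C$ to the four forms is identical to yours.
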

	\begin{proof}  The pairs of terms $S_1,S_4$ and $S_2,S_3$ are symmetric with respect to dual pairings. Hence, we show the estimate in the two distinct cases of $S_1$ and $S_3.$ By duality it is enough to estimate the pairings  $\langle S_i(f),\psi\rangle$.
		
		First, for the term $S_1$ we only use the assumptions involving the functions $A,B.$  By sparseness we get
		\begin{equation*}
		\begin{split}
		\Bnorm{\langle S_1(f),\psi\rangle}{} &\leq \sum_{Q\in S} \int_Q\abs{b_1-\langle b_1\rangle_Q}\abs{\psi}\langle\abs{b_2-\langle b_2\rangle_Q}\abs{f}\rangle_Q \\
		&\lesssim \sum_{Q\in S} \abs{Q}\langle \abs{b_1-\langle b_1\rangle}\rangle_{A,Q}\langle\abs{\psi} \rangle_{\bar A, Q}\langle\abs{b_2-\langle b_2\rangle_Q}\rangle_{B,Q}\langle\abs{f} \rangle_{\bar B, Q} \\
		&\leq S_{A,B}(b_1,b_2) \sum_{Q\in S} \abs{Q}\langle\abs{\psi} \rangle_{\bar A,Q}\langle\abs{f} \rangle_{\bar B, Q} \leq \gamma^{-1}S_{A,B}(b_1,b_2) \sum_{Q\in S} \int_{E_Q}\langle\abs{\psi} \rangle_{\bar A,Q}\langle\abs{f} \rangle_{\bar B, Q} \\
		&\leq \gamma^{-1}S_{A,B}(b_1,b_2) \sum_{Q\in S} \int_{E_Q}  {M}_{\bar A}\psi {M}_{\bar B}f\leq  \gamma^{-1}S_{A,B}(b_1,b_2) \bNorm{ {M}_{\bar A}\psi}{L^2}\bNorm{ {M}_{\bar B}f}{L^2} \\
		&\lesssim S_{A,B}(b_1,b_2)  \bNorm{\psi}{L^2}\bNorm{f}{L^2},
		\end{split}
		\end{equation*}
where we have used Proposition \ref{PE95} in the last step.

Next we use the condition $T_C$ to control the term $S_3$:
\begin{align*}
\Bnorm{\langle S_3(f),\psi\rangle}{} &\leq \sum_{Q\in S} \abs{Q}\langle \abs{\psi} \rangle_Q  \langle\abs{b_1-\langle b_1\rangle_Q}\abs{b_2-\langle b_2\rangle_Q}\abs{f}\rangle_Q \\
&\leq  \sum_{Q\in S} \abs{Q}\langle \abs{\psi} \rangle_Q \langle \abs{f} \rangle_{\bar C, Q} \langle\abs{b_1-\langle b_1\rangle_Q} \abs{b_2-\langle b_2\rangle_Q} \rangle_{C,Q} \\
&\leq T_C(b_1,b_2)\sum_{Q\in S} \abs{Q}\langle \abs{\psi} \rangle_Q \langle \abs{f} \rangle_{\bar C, Q}  \leq \gamma^{-1} T_C(b_1,b_2)\bNorm{M\psi}{L^{2}}\bNorm{M_{\bar C}f}{L^2}\\
&\lesssim  T_C(b_1,b_2)\bNorm{\psi}{L^{2}}\bNorm{f}{L^2}.
\end{align*}
 	\end{proof}

Since with $A(t)=t^p,$ $\bar A \in B_{2},$ for $p>2,$ we immediately get:
\begin{corollary}\label{rmk:p}
	Let $T$ be as before and assume that a pair of functions $b_1,b_2\in L^{2p}_{loc}(\R^d)$ satisfy the conditions $T_{p}$ and $S_{p}$ for some $p>2.$ Then we have
	\begin{equation*}
	\begin{split}
	C_bT\colon L^2(\R^d)\longrightarrow L^2(\R^d)
	\end{split}
	\end{equation*}
	boundedly.
\end{corollary}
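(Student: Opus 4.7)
The plan is to deduce the corollary as an immediate specialization of Theorem \ref{upperbound} with power-type Young functions. Choose $A(t)=B(t)=C(t)=t^p$; this is a Young function since $p>1$ gives convexity and $A(t)/t=t^{p-1}\to\infty$. With this choice the Luxemburg norm collapses to the usual $L^p$ average,
\begin{equation*}
\langle |f|\rangle_{A,Q}=\Big(\tfrac{1}{|Q|}\int_Q|f|^p\Big)^{1/p},
\end{equation*}
so $S_{A,B}(b_1,b_2)=S_p(b_1,b_2)$ and $T_C(b_1,b_2)=T_p(b_1,b_2)$, which are finite by hypothesis.

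Next I would verify the hypothesis on the complementary Young functions. A direct computation (or the identity $A^{-1}(t)\bar A^{-1}(t)\sim t$ from Proposition \ref{prop:young}) gives $\bar A(t)\sim t^{p'}$ with $p'=p/(p-1)$. The $B_2$ condition then reads
\begin{equation*}
\int_1^\infty \frac{\bar A(t)}{t^2}\,\frac{dt}{t}\sim \int_1^\infty t^{p'-3}\,dt<\infty,
\end{equation*}
which holds precisely when $p'<2$, i.e. $p>2$, exactly the assumption of the corollary. So $\bar A,\bar B,\bar C\in B_2$.

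Finally I would check the local integrability prerequisites required to invoke Theorem \ref{upperbound}: $b_1,b_2\in L^A_{\loc}=L^p_{\loc}$ is immediate from $b_1,b_2\in L^{2p}_{\loc}$, and $b_1^2,b_2^2\in L^C_{\loc}=L^p_{\loc}$ is equivalent to $b_1,b_2\in L^{2p}_{\loc}$, which is our hypothesis. All assumptions of Theorem \ref{upperbound} are therefore met, and the boundedness $C_bT\colon L^2\to L^2$ follows immediately.

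There is really no substantive obstacle here: the content of the corollary is entirely absorbed by Theorem \ref{upperbound}, and the only thing to check is the elementary bookkeeping that $t^p$ satisfies the required $B_2$ threshold exactly when $p>2$. This is what motivates the specific assumption $p>2$ and explains why passing to genuine Young functions (rather than pure powers) in Theorem \ref{upperbound} is strictly more general.
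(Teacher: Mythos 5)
Your proof is correct and follows exactly the same route as the paper: choose $A=B=C=t^p$, note $\bar A(t)\sim t^{p'}\in B_2$ precisely when $p>2$, and invoke Theorem \ref{upperbound}. The paper states this in a single sentence; your version simply spells out the same bookkeeping in more detail.
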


We close this section with some remarks.

\begin{remark}
For Theorem \ref{upperbound} the difference in the case $p\not=2$ is that we need to introduce 3 more Young functions to manage the now non-symmetric dual pairings from the terms $S_2,S_4.$  According to Definition \ref{sufdef} the existing Young functions functions are replaced with ones satisfying
\begin{equation*}
\bar A \in B_{p'},\quad \bar B,\bar C\in B_p
\end{equation*}
and are supplemented with Young functions $D,E,F$ satisfying
\begin{equation*}
\bar D,\bar F\in B_p,\quad \bar E\in B_{p'}
\end{equation*}
and $S_{D,E}(b_1,b_2)+T_F(b_1,b_2) < \infty.$
\end{remark}

\begin{remark}\label{altsuf} Given a $q\in(2,\infty),$ adapting the proof of Theorem \ref{upperbound} shows that if $b_1,b_2$ satisfy the conditions $S_{q+\varepsilon},T_{q+\varepsilon}$ for any $\varepsilon>0,$ then $C_bT:L^q\to L^q$ boundedly.

On the other hand, for $q\in(1,2),$ the conditions $S_p,T_p$ with $p\in(q,2)$ are not strong enough to conclude  that $C_bT:L^q\to L^q$ boundedly. Indeed, if they were, then by duality and interpolation $C_bT:L^2\to L^2$ boundedly and Theorem \ref{lowerbound} would imply the condition $S_2$. This gives a contradiction since by Proposition \ref{prop1} (see below) there exist functions $\phi,\psi$ such that $S_p,T_p$ are satisfied and $S_2$ is not.

\end{remark}

\section{Conjecture and related examples}
In this last section we continue discussing the conditions $S_{A,B}, T_C$ and their interdependence with the boundedness properties of the commutator on different $L^p$ spaces.

First, we note that it follows by the John-Nirenberg inequality that if $b_1,b_2\in\BMO,$ then the conditions $S_p,T_q$ hold for all $p,q\geq1.$
Hence, a natural question is immediate: Are $S_p$, and respectively $T_p$, equivalent for all or some $1\leq p<\infty.$
Or even in a weaker sense: if both of the conditions $S_p,T_p$ hold simultaneously, could we deduce that $S_q$ or $T_q$ holds for some $q>p?$ By Theorem \ref{nip1} the answer is no and the example located therein is the motivation for introducing joint conditions involving Young functions that can be made strictly weaker than $S_{2+\varepsilon}+T_{2+\varepsilon}$ for all $\varepsilon >0.$

The next proposition will clarify the situation and point out how the counterexample in Theorem \ref{nip1} can be constructed. For this, recall, that a function $\omega:\R^d\to(0,\infty)$ is said to be in the class of $A_p$ weights, $1<p<\infty$, if
$$[w]_{A_p} = \sup_{Q} \ave{w}_Q\ave{w^{-\frac{p'}{p}}}_Q^{\frac{p}{p'}}<\infty,\quad p' = \frac{p}{p-1},$$
where the supremum is taken over all cubes.

\begin{proposition}\label{prop1} Given $1<p<q<\infty,$ there exists functions $\phi,\psi\in L^p_{loc}$ satisfying the conditions $S_p,T_p$ and failing the condition $S_q.$
\end{proposition}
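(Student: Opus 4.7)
My plan is to build $\phi$ and $\psi$ as sparse sums of rescaled indicators on carefully placed subcubes. Let $\{Q_n\}_{n\ge 1}$ be pairwise disjoint unit cubes in $\R^d$ with $\dist(Q_n,Q_m)\ge 1$ whenever $n\ne m$, and inside each $Q_n$ choose two subcubes $R_n,S_n$ of volume $1/n$ positioned at diametrically opposite corners. Define
\[
\phi=\sum_{n\ge 1} n^{1/p}\mathbf{1}_{R_n},\qquad \psi=\sum_{n\ge 1} n^{1/p}\mathbf{1}_{S_n}.
\]
Since $\int_{Q_n}|\phi|^p=1=\int_{Q_n}|\psi|^p$ and only finitely many $Q_n$ meet any compact set, $\phi,\psi\in L^p_{\loc}$.

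To show $S_q=\infty$ I test on $Q_n$ itself: from $\langle\phi\rangle_{Q_n}=n^{1/p-1}$ the contribution of the $R_n$-piece dominates, giving
\[
\langle|\phi-\langle\phi\rangle_{Q_n}|^q\rangle_{Q_n}^{1/q}\sim n^{1/p-1/q},
\]
with the same estimate for $\psi$. Hence $S_q(\phi,\psi)\gtrsim n^{2(1/p-1/q)}\to\infty$ because $q>p$. Running the same computation with $p$ in place of $q$ only yields $O(1)$, so these distinguished cubes are already compatible with $S_p,T_p<\infty$.

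For the uniform boundedness of $S_p$ and $T_p$ my key observation is that both the $S_p$-product and the $T_p$-integrand vanish at $Q$ unless $\phi$ and $\psi$ are simultaneously non-constant on $Q$, which requires $Q$ to meet both $\supp\phi=\bigcup_n R_n$ and $\supp\psi=\bigcup_n S_n$. The opposite-corner placement then forces $|Q|\gtrsim 1$ whenever $Q$ meets $R_n$ and $S_n$ for the same $n$, and the spacing of $\{Q_n\}$ forces $|Q|\gtrsim\#\{n:Q\cap Q_n\ne\emptyset\}$ whenever $Q$ meets several $Q_n$. Setting $I=\{n:Q\cap Q_n\ne\emptyset\}$, the crude bound $\langle|\phi|^p\rangle_Q\le |I|/|Q|\lesssim 1$ (and likewise for $\psi$) controls $S_p$. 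For $T_p$ I use the elementary expansion $|a-\langle a\rangle|^p|b-\langle b\rangle|^p\lesssim (|a|^p+|\langle a\rangle|^p)(|b|^p+|\langle b\rangle|^p)$; the cross term involves $|\phi\psi|^p$ and vanishes identically because $\supp\phi\cap\supp\psi=\emptyset$, while the remaining three terms are bounded by the same $L^p$-average control.

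The main obstacle is ensuring that no single small cube captures both oscillations: if one placed $R_n,S_n$ close together inside $Q_n$, then the cube $Q\approx R_n\cup S_n$ with $|Q|\sim 2/n$ would yield $\langle|\phi-\langle\phi\rangle_Q|^p\rangle_Q^{1/p}\langle|\psi-\langle\psi\rangle_Q|^p\rangle_Q^{1/p}\sim n^{2/p}$, already destroying $S_p$. The opposite-corner placement is precisely what prevents this by geometrically separating $\supp\phi$ and $\supp\psi$ within each $Q_n$, converting the only dangerous test cube into one of macroscopic size.
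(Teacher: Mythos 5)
The proposal is correct, and it takes a genuinely different route from the paper's proof. The paper works on $\R$ and sets $\psi(x)=x^{-2/(p+q)}1_{(0,1)}$, $\phi=\psi^{-1}1_{(0,1)}$, so that $\phi\psi\equiv1$ on $(0,1)$; it controls $S_p$ by appealing to the $A_2$-weight property of $|x|^{-2p/(p+q)}$, controls $T_p$ via the pointwise bound $\phi\psi\le1$, and lets $S_q$ blow up because $\psi$ develops a non-$q$-integrable power singularity. Your construction replaces the reciprocal pair by \emph{disjointly supported} sparse sums of rescaled indicators, the extreme form $\phi\psi\equiv0$ of the same underlying mechanism (tame the product), so the $A_2$ input is replaced by a direct counting estimate $\int_Q|\phi|^p\le\#\{n:Q\cap Q_n\ne\emptyset\}\lesssim_d|Q|$, and the geometry of the opposite-corner placement guarantees that any cube on which \emph{both} functions oscillate is already of unit scale. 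Two small points worth making explicit: for $n\le 2^d$ the ``diametrically opposite'' subcubes of volume $1/n$ overlap, so the sum should start from some $n_0>2^d$ (this does not affect the argument, since $S_q$ is made to blow up along $n\to\infty$); and the separation hypothesis $\dist(Q_n,Q_m)\ge 1$ on disjoint unit cubes does give $|Q|\gtrsim_d\#\{n:Q\cap Q_n\ne\emptyset\}$ for $|Q|\gtrsim 1$, while for small $Q$ at most one $Q_n$ is met and both $S_p$- and $T_p$-integrands vanish unless $Q$ meets $R_n$ and $S_n$ simultaneously, forcing $|Q|\gtrsim 1$ again. Your route is entirely elementary (no $A_2$ theory), works verbatim in every dimension, and has the extra benefit of producing $\phi,\psi\in L^\infty_{\loc}$ directly, which the paper only obtains after a separate remark about spreading out the singularities of the power-function example.
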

\begin{proof} Let
	\begin{equation*}
	\psi(x) = x^{-\frac{2}{p+q}}1_{(0,1)},\qquad \phi = \psi^{-1}1_{(0,1)}.
	\end{equation*}
	We check that the conditions $S_p,T_p$ hold.
	Let $[a,b)$ be an arbitrary intervall such that $[a,b)\cap[0,1)=[c,d)\not=\emptyset$ (if the intersection is empty, then the claim is trivial). First,
	\[
	\frac{1}{b-a}\int_a^b\abs{\psi-\langle\psi\rangle_{[a,b)}}^p\frac{1}{b-a}\int_a^b\abs{\phi-\langle\phi\rangle_{[a,b)}}^p \leq 	\frac{1}{d-c}2^{p+1}\int_c^d\psi^p\frac{1}{d-c}2^{p+1}\int_c^d\phi^p.
	\]
	Then, by the fact (see Grafakos \cite{CFA}) that $|x|^{-\frac{2p}{p+q}}\in A_2$, we have
	\[
	\frac{1}{d-c} \int_c^d\psi^p\frac{1}{d-c} \int_c^d\phi^p\le [|x|^{-\frac{2p}{p+q}}]_{A_2}.
	\]	
	It follows that $S_p(\psi,\phi) \lesssim 1.$
	
	By the above estimates and $\phi\psi \leq1,$ it follows for an arbitrary interval $I$ that
	\[
	\frac{1}{\abs{I}}\int_I\abs{\psi-\langle\psi\rangle_I}^p\abs{\phi-\langle\phi\rangle_I}^p \leq 4^p	\frac{1}{\abs{I}}\left(\int_I\psi^p\phi^p + \int_I\psi^p\langle\phi\rangle_I^p+ \int_I\phi^p\langle\psi\rangle_I^p + \langle\psi\rangle_I^p \langle\phi\rangle_I^p      \right) \lesssim 1.
	\]
	Hence $T_p(b_1,b_2)<\infty.$
	
	On the other hand by $-2q/(p+q) < -1,$ the singularity in $\int_0^1\abs{\psi-\langle\psi\rangle_{[0,1)}}^q$ is not integrable, and by $\int_0^1\abs{\phi-\langle\phi\rangle_{[0,1)}}^q > 0,$ we have $S_q(\psi,\phi)=\infty.$
\end{proof}

\begin{remark} If one wishes to have $\psi,\phi\in L^{\infty}_{loc},$ say to have the joint conditions well-defined, Proposition \eqref{prop1} can be modified by considering multiple copies of the situation spread out through $\R$ and introducing the singularities in $\psi$'s only gradually as is done in the next theorem.
\end{remark}

\begin{theorem}\label{nip1} There exist functions $\psi,\phi \in L^{\infty}_{loc}$ failing the condition $S_{2+\varepsilon}$ for all $\varepsilon > 0,$ such that  $[\phi,[\psi,H]]:L^2\to L^2$ boundedly, where $H$ is the Hilbert transform, i.e. the 1-dimensional Riesz transform.

\end{theorem}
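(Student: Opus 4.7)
The plan is to build $\psi,\phi\in L^{\infty}_{\mathrm{loc}}$ as sums of disjointly-supported constant ``spikes'' placed on opposite half-lines, so that the geometric separation of their supports prevents any short cube from catching significant mass of both functions. This makes it possible to fail $S_{2+\varepsilon}$ on large balanced cubes while preserving the Young-function-type conditions that drive Theorem~\ref{upperbound}. Fix $\delta>0$ and $A(t)=t^{2}\log^{1+\delta}(e+t)$; the asymptotics $\bar A(s)\sim s^{2}/\log^{1+\delta}(e+s)$ give $\bar A\in B_{2}$. Set $h_{k}=k$ and $\ell_{k}=h_{k}^{-2}\log^{-(1+\delta)}(e+h_{k})$, chosen so that $A(h_{k})\ell_{k}=1$, and define
\[
\psi=\sum_{k\ge 1}h_{k}\mathbf{1}_{[k,\,k+\ell_{k}]},\qquad
\phi=\sum_{k\ge 1}h_{k}\mathbf{1}_{[-k,\,-k+\ell_{k}]}.
\]
Local boundedness and $\psi\phi\equiv 0$ are immediate, so the setup of Theorem~\ref{sparsedom} is satisfied.

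For the failure of $S_{2+\varepsilon}$ I test on the cubes $Q_{N}=[-N,N]$. Since $\sum_{k}h_{k}\ell_{k}$ converges, $\langle\psi\rangle_{Q_{N}}=\langle\phi\rangle_{Q_{N}}=O(1/N)$, and a direct computation gives
\[
\langle|\psi-\langle\psi\rangle_{Q_{N}}|^{2+\varepsilon}\rangle_{Q_{N}}\sim \frac{1}{N}\sum_{k\le N}\frac{k^{\varepsilon}}{\log^{1+\delta}(e+k)}\sim \frac{N^{\varepsilon}}{\log^{1+\delta}N},
\]
with the identical estimate for $\phi$. Taking $(2+\varepsilon)$-th roots and multiplying gives a quantity of order $N^{2\varepsilon/(2+\varepsilon)}/\log^{2(1+\delta)/(2+\varepsilon)}N\to\infty$, so $S_{2+\varepsilon}(\psi,\phi)=\infty$.

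For the $L^{2}$-boundedness of $[\phi,[\psi,H]]$ I apply Theorem~\ref{upperbound} with $A=B=C$. The key uniform estimate is $\int_{Q}A(\psi)\le \#\{k:[k,k+\ell_{k}]\cap Q\ne\emptyset\}\lesssim|Q|+1$ (by the choice $A(h_{k})\ell_{k}=1$), whence $\langle A(\psi)\rangle_{Q}\lesssim 1$ as soon as $|Q|\gtrsim 1$; on cubes $Q$ with $|Q|<1$ the two supports, lying in $[1,\infty)$ and $(-\infty,-1]$ respectively, are too far apart to both meet $Q$, so whichever Luxemburg factor could blow up has the other one vanishing. This yields $S_{A,A}(\psi,\phi)<\infty$. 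For $T_{A}$, the identity $(\psi-\langle\psi\rangle_{Q})(\phi-\langle\phi\rangle_{Q})=-\langle\psi\rangle_{Q}\phi-\langle\phi\rangle_{Q}\psi+\langle\psi\rangle_{Q}\langle\phi\rangle_{Q}$ (via $\psi\phi=0$) reduces matters to pointwise control of the cross-terms, which holds because on any cube meeting both sides we have $|Q|\gtrsim k$ for every height-$k$ spike present, so $\langle\psi\rangle_{Q}\phi$ and $\langle\phi\rangle_{Q}\psi$ are uniformly bounded.

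The principal obstacle is the per-cube bookkeeping that confirms at least one of the two Luxemburg factors stays controlled, uniformly in $Q$; the opposite-half-line placement is precisely the device that makes this routine case analysis work. Once the cases (cube inside one half-line, cube straddling origin but too short to catch a spike, cube straddling origin and catching spikes on both sides) are dispatched, Theorem~\ref{upperbound} yields $[\phi,[\psi,H]]:L^{2}(\R)\to L^{2}(\R)$ and completes the proof.
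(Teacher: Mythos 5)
Your construction is correct and reaches the conclusion, but it is a genuinely different construction from the one in the paper, so a comparison is worthwhile.

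The paper builds $\psi$ as a sequence of translated, truncated power singularities $c_k x^{-\eta_k}\mathbf{1}_{(c_k^{6k}e^{-100k^2},1)}$ with exponents $\eta_k=1/(2+k^{-1})\nearrow 1/2$, paired against a fixed bounded periodic profile $\phi$ built from $x^{1/2}\mathbf{1}_{(0,1)}$. For each $k$ it verifies $S_q,T_q$ with $q=2+k^{-1}/2>2$ using the $A_2$-weight bound for $|x|^{-q\eta_k}$, invokes the power-type Corollary~\ref{rmk:p} to bound $\|[\phi,[\psi_k,H]]\|_{L^2\to L^2}$, and then tunes the free constants $c_k$ so that the operator norms are summable. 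Failure of $S_{2+\varepsilon}$ is exhibited on a fixed unit interval $(k,k+1)$ around the $k$-th singularity once $k$ is large enough that $(2+\varepsilon)\eta_k>1$.

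You instead take flat ``spikes'' $h_k\mathbf{1}_{[k,k+\ell_k]}$, $h_k=k$, with widths $\ell_k$ calibrated by $A(h_k)\ell_k=1$ for one fixed log-bump $A(t)=t^2\log^{1+\delta}(e+t)$, and place $\psi$ on $[1,\infty)$, $\phi$ on $(-\infty,0]$. The calibration makes $\int_Q A(\psi)\le\#\{\text{spikes meeting }Q\}\lesssim|Q|+1$ immediately, so both Luxemburg norms are $O(1)$ for $|Q|\gtrsim 1$, while for short cubes the disjointness of the two supports forces one factor in $S_{A,A}$ and $T_A$ to vanish. That lets you invoke Theorem~\ref{upperbound} once, with a single Young function, and skip the sequence-of-commutators summation entirely. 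The identity $\psi\phi\equiv 0$ cleanly kills the quadratic term in $T_A$. Failure of $S_{2+\varepsilon}$ is then detected on the symmetric cubes $[-N,N]$, where both $L^{2+\varepsilon}$-oscillations grow like $N^{\varepsilon/(2+\varepsilon)}\log^{-(1+\delta)/(2+\varepsilon)}N$; you need the spikes on both half-lines precisely because if one function were bounded, its oscillation factor would decay like $N^{-1/(2+\varepsilon)}$ and kill the product for small $\varepsilon$.

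What each buys: your construction is more elementary (step functions, no $A_2$ theory, no infinite sum of commutators) and exhibits the Young-function phenomenon transparently by reverse-engineering the widths from the bump $A$; it also comes closer in spirit to Theorem~\ref{jnce-1} and Theorem~\ref{lastexample}, which likewise test a single pair against Young-function conditions directly. The paper's version, on the other hand, shows that the failure can be localised to unit intervals and that the mechanism (barely-$A_2$ power weights) is the ``same'' across the whole hierarchy of $S_p$ conditions, which ties into Proposition~\ref{prop1}. Both routes are valid; yours is a clean and arguably simpler alternative. Two small points of rigour you should nail down when writing this up: (i) in the $T_A$ bound for large $Q$ you should work through the Luxemburg norm (e.g.\ $\langle|\langle\phi\rangle_Q\psi|\rangle_{A,Q}=\langle\phi\rangle_Q\langle\psi\rangle_{A,Q}\lesssim |Q|^{-1}\cdot 1$) rather than the pointwise statement about spike heights, since the pointwise bound $\langle\phi\rangle_Q\psi\lesssim 1$ is true but mediated by exactly that computation; and (ii) make explicit that the subtraction of $\langle\psi\rangle_{Q_N}=O(1/N)$ does not affect the divergence of the $L^{2+\varepsilon}$ oscillation, since the leading term diverges while the correction vanishes.
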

\begin{remark}
	By Theorem \ref{lowerbound} the $L^2$ boundedness implies that $\psi,\phi$ satisfy the conditions $T_2,S_2.$
\end{remark}
\begin{proof}[Proof of Theorem \ref{nip1}]
	Let
	\begin{equation*}
	\psi_0^k(x) = c_kx^{-\eta_k}1_{(c_k^{6k}e^{-100k^2},1)}(x), \qquad \eta_k = \frac{1}{2+k^{-1}},\qquad
	\phi_0(x) = x^{1/2}1_{(0,1)}(x),
	\end{equation*}
	where $c_k$ depends on $k$ and will be determined later.
	Let $\tau_hf(x) = f(x-h).$
	Then set $\phi_k = \tau_k\phi_0$  and $\psi_k = \tau_k\psi^{k}_0$.
	Finally we define
	\begin{equation*}
	\phi = \sum_{k\in 2\mathbb Z }\phi_k,\qquad \psi = \sum_{k\in 4\mathbb N +2}\psi_k.
	\end{equation*}
	Let $k\in 4\mathbb N +2$ be fixed. We first show that the pair $(\psi_k, \phi)$ satisfies $S_q,T_q$ for $q=q_k=2+k^{-1}/2.$
	Since $\tau_{k}\phi=\phi$ it suffices to prove that $(\psi_0^k, \phi)$ satisfies $S_q,T_q$. Again, for any interval $I$, we have
	\begin{align*}
	\frac 1{|I|}\int_I \abs{\psi_0^k-\langle\psi_0^k\rangle_{I} }^q\frac 1{|I|}\int_I \abs{\phi -\langle\phi \rangle_{I} }^q\le 4^{q+1} \langle |\psi_0^k|^q\rangle_I \langle |\phi |^q\rangle_I.
	\end{align*} We first consider the case when $\ell(I)\le 1$ and we may further assume that $I\subset (0,1)$.
	Since $q\eta_k<1$ we know that $|x|^{-q\eta_k}\in A_2$ and hence, by $I\subset(0,1),$
	$$
	4^{q+1} \langle |\psi_0^k|^q\rangle_I \langle |\phi |^q\rangle_I\le 4^{7/2} c_k^{5/2}[|x|^{-q\eta_k}]_{A_2}.
	$$It remains to consider the case when $\ell(I)>1$. Since certainly $(0,1)\cap I\neq \emptyset$ (as otherwise there is nothing to prove) we know that $(0,1)\subset 3I$. Then due to that $\phi$ is a periodic function we have
	\[
	4^{q+1} \langle |\psi_0^k|^q\rangle_I \langle |\phi |^q\rangle_I\le  4^{q+1} \langle |\psi_0^k|^q\rangle_{(0,1)} \langle |\phi |^q\rangle_{(0,1)}\le  4^{7/2} c_k^{5/2}[|x|^{-q\eta_k}]_{A_2}.
	\]Therefore, we conclude that
	\[
	S_q\le  4^{7/2} c_k^{5/2}[|x|^{-q\eta_k}]_{A_2}.
	\]
	On the other hand, since $\psi_0^k \phi_0\le c_k$, then by similar arguments as in Proposition \ref{prop1} we have
	$$
	T_q\le 4^{7/2} c_k^{5/2}[|x|^{-q\eta_k}]_{A_2}.
	$$
	Hence by Theorem \ref{upperbound} (see below) we know that the commutator $[\phi,[\psi_k,H]]$ is bounded  on $L^2$ with norm $\sim c_k^{5/2} [|x|^{-q\eta_k}]_{A_2}\|M_{q'}\|_{L^2\rightarrow L^2}^2$. Thus, we may further demand  the constant $c_{k}$ to be so small that $\bNorm{[\phi,[\psi_k,H]]}{L^2\to L^2} \leq 2^{-k}$. Then
	$[\phi,[\psi, H]]$ also is bounded on $L^2$:
	\begin{equation*}
	\bNorm{	[\phi,[\psi,H]]}{L^2\to L^2} = \bNorm{[\phi,[\sum_{k\in 4\mathbb N+2}\psi_k,H]]}{L^2\to L^2} \leq \sum_{k\in4\mathbb N+2}\bNorm{[\phi,[\psi_k,H]]}{L^2\to L^2} \leq \sum_{k=1}^{\infty}2^{-k}=1.
	\end{equation*}		
	
	It remains to check that the pair $(\psi,\phi)$ is precisely what we need. It is obvious that $\psi,\phi\in L_{loc}^\infty$.  It remains to verify that $(\psi,\phi)$ fails $S_{2+\varepsilon}$ for any $\varepsilon>0$.
	By H\"older's inequality we can assume  $0<\varepsilon<1.$ Find $\ell\in \N$ such that with $k:= 4l+2$ it holds that $(2+\varepsilon)\eta_k >1+(2k)^{-1}.$ Hence, with $I=(k,k+1)$ we get
	\begin{align*}
	\int_{I}\Babs{\psi-\langle\psi\rangle_{I}}^{2+\varepsilon} = \int_{0}^1\Babs{\psi_0^k-\langle\psi_0^k\rangle_{(0,1)}}^{2+\varepsilon} &\ge  c_k^{2+\varepsilon} \int_{c_k^{6k} e^{-100k^2}}^{2c_k^{6k} e^{-100k^2}}\Babs{ x^{-\frac{1}{2+k^{-1}}} - \int_{c_k^{6k}e^{-100k^2}}^1 x^{-\frac{1}{2+k^{-1}}} }^{2+\varepsilon}\ud x \\
	&\gtrsim c_k^{2+\varepsilon} \int_{c_k^{6k} e^{-100k^2}}^{2c_k^{6k} e^{-100k^2}}\Babs{ x^{-\frac{1}{2+k^{-1}}}   }^{2+\varepsilon}\ud x\\
	&\gtrsim c_k^{\varepsilon-1} e^{50k}.
	\end{align*}
	On the other hand,
	\begin{equation*}
	\int_{I}\Babs{\phi-\langle\phi\rangle_{I}}^{2+\varepsilon} = \int_{0}^1\Babs{\phi_0-\langle\phi_0\rangle_{(0,1)}}^{2+\varepsilon}\sim 1.
	\end{equation*}
	We conclude the proof by letting $\ell \rightarrow \infty$.
\end{proof}

\begin{theorem}\label{jnce-1}
	There exists $b_1, b_2\in L^{\infty}_{\rm{loc}}(\R)$ such that $S_2(b_1, b_2)+T_2(b_1, b_2)<\infty$, but $S_{A,B}(b_1, b_2)=\infty$ and $T_C(b_1, b_2)=\infty$ for arbitrary Young functions $A,B,C$ with $\bar A, \bar B, \bar C\in B_2$.
	Moreover, $C_bH: L^2\not\to L^2.$
\end{theorem}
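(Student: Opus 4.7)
The plan is to exhibit $b_1,b_2\in L^\infty_{\rm{loc}}(\R)$ with $S_2+T_2<\infty$, with every Young-function oscillation $S_{A,B}$ and $T_C$ infinite under $\bar A,\bar B,\bar C\in B_2$, and with $\|C_bH\|_{L^2\to L^2}=\infty$. The construction will follow the Rubio de Francia-style block decomposition of Theorem \ref{nip1}, namely $b_i=\sum_k\tau_kb_{i,0}^k$ with disjointly supported blocks centred at integers, but recalibrated so that the commutator blows up while the $L^2$ oscillation stays controlled.

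Concretely I would take $b_{1,0}^k(x)=c_kx^{-\eta_k}1_{(a_k,1)}(x)$ and $b_{2,0}^k(x)=d_kx^{-\gamma_k}1_{(a_k,1)}(x)$ with $\eta_k,\gamma_k\nearrow 1/2$ and $\eta_k+\gamma_k\nearrow 1/2$, so that each block, together with the product $b_{1,0}^kb_{2,0}^k$, sits at the $L^2$ criticality; and I would pick the prefactors $c_k,d_k$ and truncations $a_k$ so that the $k$-th block's contribution to $S_2+T_2$ decays like $2^{-k}$, as in the $A_2$-weight computation of Theorem \ref{nip1}. Summing over $k$ and handling cross-cube contributions by the standard averaging argument, one obtains $S_2(b_1,b_2)+T_2(b_1,b_2)<\infty$ and $b_1,b_2\in L^\infty_{\rm{loc}}$. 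For any admissible $A,B,C$, Proposition \ref{prop:young}(iv) forces $A,B,C\succeq t^2$, so the associated Luxemburg norms strictly dominate the $L^2$ norm; since the blocks are arranged at the $L^2$-critical singularity, $\bla|b_{1,0}^k|\bra_{A,(0,1)}$, $\bla|b_{2,0}^k|\bra_{B,(0,1)}$ and $\bla|b_{1,0}^kb_{2,0}^k|\bra_{C,(0,1)}$ all diverge as $a_k\searrow 0$. Testing $S_{A,B}$ and $T_C$ on the cubes $(k,k+1)$ then gives $S_{A,B}(b_1,b_2)=T_C(b_1,b_2)=\infty$ for every admissible triple.

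The main obstacle is the lower bound $\|C_bH\|_{L^2\to L^2}=\infty$: here Theorem \ref{lowerbound} is useless, since it only gives $\|C_bH\|\gtrsim S_2+T_2$, which is finite by construction. Instead I would perform a direct test-function argument in the spirit of Lemma \ref{rieszlemma}, expanding
\[
\bla C_bHf_k,g_k\bra=\bla b_2b_1Hf_k-b_2H(b_1f_k)-b_1H(b_2f_k)+H(b_1b_2f_k),g_k\bra,
\]
with $f_k,g_k$ scaled bumps concentrated on the singular scale $a_k$ of the $k$-th block, and extracting a leading term comparable to the $C$-Luxemburg norm of $(b_{1,0}^k-\bla b_{1,0}^k\bra_{(0,1)})(b_{2,0}^k-\bla b_{2,0}^k\bra_{(0,1)})$ which was just shown to diverge. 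The $k$-th contribution alone is then enough to give $\|C_bH\|_{L^2\to L^2}\geq\omega(k)\to\infty$, finishing the proof.
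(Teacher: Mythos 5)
There is a genuine gap, and in fact the paper's construction is of a fundamentally different nature than the one you sketch. The paper does not use a block decomposition at all. It takes a single explicit pair built from the weight $w=M(1_{I_0})^{-1}$, namely $b_1=\sgn\cdot 1_{I_0}$ and $b_2=\sgn\cdot w^{1/2}$. The crucial feature is that $b_2(x)\sim\sqrt{|x|}$ \emph{grows at infinity} while $b_1$ is compactly supported and bounded. Finiteness of $S_2,T_2$ then falls out of the pointwise relation $\sup_I\langle\sigma\rangle_I\langle w\rangle_I\le 1$ and $|b_1b_2|\le 1$. Divergence of $S_{A,B}$ and $T_C$ for all admissible Young functions is obtained by testing on the \emph{large} cubes $(-k,k)$ (resp. $(0,k)$) and letting $k\to\infty$: one gets a factor $1/A^{-1}(k)$ from the compactly supported $b_1$ against a factor $k^{1/2}$ from the growing $b_2$, and the product $\bar A^{-1}(k)/k^{1/2}\to\infty$ precisely because $\bar A\in B_2$ forces $\bar A(t)=o(t^2)$. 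This mechanism — divergence coming from the unbounded growth of $b_2$ at infinity, not from a local singularity — is entirely absent from your block construction, in which every block is a compactly supported bounded function.

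Beyond the difference in route, your sketch has concrete problems. First, the parameters are inconsistent: $\eta_k,\gamma_k\nearrow 1/2$ forces $\eta_k+\gamma_k\nearrow 1$, contradicting $\eta_k+\gamma_k\nearrow 1/2$. Choosing one or the other forces a trade-off: if $\eta_k+\gamma_k$ stays bounded away from $1/2$ then $b_{1,0}^k b_{2,0}^k$ is subcritical and the $C$-Luxemburg norm stays comparable to its $L^2$ average, so $T_C$ remains finite; if $\eta_k+\gamma_k\to 1/2$ then at most one of $\eta_k,\gamma_k$ can approach the critical value $1/2$, the other tends to $0$, and then only one of the two factors in $S_{A,B}$ can carry a divergent Luxemburg norm while the other degenerates to a constant — it is not at all clear (and you do not verify) that the product can still be made to diverge for \emph{every} admissible pair $(A,B)$ while $S_2,T_2$ stay bounded. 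Second and most seriously, the unboundedness of $C_bH$ is the hardest claim in the theorem and your plan for it is only a one-sentence heuristic ("extracting a leading term comparable to the $C$-Luxemburg norm"). Theorem \ref{lowerbound} gives $\|C_bH\|\gtrsim S_2+T_2$ and is of no help here since you have arranged $S_2+T_2<\infty$; you would need a genuinely new lower-bound estimate in terms of Luxemburg norms, and you would also need to rule out cancellations between blocks. By contrast, the paper proves unboundedness in three lines by exhibiting the concrete $f(x)=x^{-1/2}(\log x)^{-1}1_{[100,\infty)}\in L^2$ and showing that $|C_bHf(x)|=\infty$ for every $x\in I_0$, precisely because $b_2$ grows like $\sqrt{|x|}$ so that the integrand has a non-integrable tail $\sim 1/(y\log y)$. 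Without the growth-at-infinity mechanism, it is doubtful that a local block construction can produce an example where the full statement of Theorem \ref{jnce-1} holds.
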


\begin{proof} We prove the result via the following example.
	Let $I_0=[-1,1]$ and
	\[
	\sigma= 1_{I_0},\qquad w=M(\sigma)^{-1},
	\]
	notice that both $\sigma$ and $w$ are even functions. It is immediate to see that
	\begin{equation}\label{eee}
	\sup_I \langle \sigma\rangle_I \langle w \rangle_I \le \sup_I \inf_{x\in I} M(\sigma)(x) \langle w \rangle_I\le \sup_I\langle  M(\sigma)w \rangle_I =1.
	\end{equation}
	Now define
	\begin{align*}
	b_1(x):= \sgn(x)\sigma(x),\qquad b_2(x):= \sgn(x) w^{\frac 12}(x),
	\end{align*}
	and notice that immediately $b_1, b_2\in L_{\rm{loc}}^{\infty}.$
	By \eqref{eee} we see that
	\begin{equation}\label{end1}
	S_2(b_1, b_2)^2=\sup_I \bla |b_1-\langle b_1\rangle_I|^2\bra_I \bla |b_2-\langle b_2\rangle_I|^2\bra_I\le 16 \sup_I \bla |b_1|^2\bra_I \bla |b_2|^2\bra_I \le 16< \infty.
	\end{equation}
	We also have
	\begin{align*}
	|b_1-\langle b_1\rangle_I|^2|b_2-\langle b_2\rangle_I|^2 & \le 4(|b_1|^2+ |\langle b_1\rangle_I|^2)(|b_2|^2+ |\langle b_2\rangle_I|^2),
	\end{align*}
	and by $|b_1b_2|\le 1$, direct calculations give us
	\begin{equation}\label{end2}
	T_2(b_1, b_2)^2=\sup_I \bla |b_1-\langle b_1\rangle_I|^2|b_2-\langle b_2\rangle_I|^2\bra_I \le 4+ 12\bla |b_1|^2\bra_I \bla |b_2|^2\bra_I\le 16<\infty.
	\end{equation}
	However, for $J_k=(-k,k)$, $k\ge 2$, since $b_1$ and $b_2$ are odd functions,
	\begin{align*}
	S_{A, B}(b_1, b_2) &\ge \lim_{k\rightarrow \infty}\bla |b_1 |\bra_{ A, J_k}\bla |b_2 |  \bra_{B, J_k} \gtrsim \lim_{k\rightarrow \infty}\bla |b_1 |\bra_{ A, J_k}\bla |b_2 |^2  \bra_{ J_k}^{\frac 12} \\
	&\sim \lim_{k\rightarrow \infty}\frac{k^{\frac 12}}{ A^{-1}(k)}\sim \lim_{k\rightarrow \infty}\frac{\bar A^{-1}(k)}{k^{\frac 12}}= \lim_{k\rightarrow \infty}\Big(\frac{(\bar A^{-1}(k))^2}{\bar A(\bar A^{-1}k) }\Big)^{\frac12},
	\end{align*}
	where we have used the fact that $M(1_{I_0})(x)\sim (1+|x|)^{-1}$. To conclude notice that immediately by definition $\lim_{t\rightarrow \infty}\bar A^{-1}(t)=\infty$  and
	\[
	\lim_{t\rightarrow \infty} \frac{\bar A(t)}{t^2}\le \frac 4{\log 2}\lim_{t\rightarrow \infty} \int_{t}^{2t} \frac{\bar A(s)}{s^2}\frac{ds}{s}=0.
	\]
	On the other hand with $I_k=(0, k)$, $k\ge 100,$ we have
	\begin{align*}
	T_{C}(b_1, b_2)&\ge \lim_{k\rightarrow \infty}  \bla |b_1- \langle b_1\rangle_{I_k}| |b_2- \langle b_2\rangle_{I_k}| \bra_{ C, I_k} \ge \lim_{k\rightarrow \infty}  \bla |b_1- \langle b_1\rangle_{I_k}| |b_2- \langle b_2\rangle_{I_k}| 1_{(0,1)}\bra_{ C, I_k}\\
	&=\lim_{k\rightarrow \infty} (1-k^{-1})  \bla   |b_2- \langle b_2\rangle_{I_k}| 1_{(0,1)}\bra_{ C, I_k}.
	\end{align*}
	Since for $x>1$, $b_2(x)=(\frac{x+1}{2})^{\frac 12}$ and for $0<x\le 1$, $b_2(x)=1,$ another direct calculation shows that
	\[
	\langle b_2\rangle_{I_k}= \frac{\sqrt 2}{3k}\big[ (k+1)^{\frac 32}-2\sqrt 2 \big]
	\]
	by which and the assumption $k\ge 100$ we see that
	\[
	|b_2- \langle b_2\rangle_{I_k}| 1_{(0,1)}\ge c k^{\frac 12}.
	\]
	Hence
	\begin{align*}
	T_{C, 2}(b_1, b_2)\ge\lim_{k\rightarrow \infty}  c(1-k^{-1})k^{\frac 12}\langle 1_{(0,1)}\rangle_{C, I_k}=\lim_{k\rightarrow \infty}  c(1-k^{-1})\frac{k^{\frac 12}}{C^{-1}(k)}=\infty.
	\end{align*}
	Next, we show that  $C_bH: L^2\not\to L^2.$
	To see this, let
	\begin{equation*}
		f(x)=x^{-\frac 12}(\log x)^{-1}1_{[100,\infty)}(x) \in L^2(\R).
	\end{equation*}
	We claim that $|C_bHf(x)|=\infty$ for all $x\in I_0$, and in showing this, hence conclude the unboundedness of $C_bH$ on $L^2$. Indeed, since for $y\in [100,\infty)$
	\[
	b_2(y)=(M(1_{I_0}))^{-\frac 12}=\sqrt{ \frac {y+1}2},
	\] and $b_1(x)=b_2(x)=\sgn(x)$, for any $x\in I_0,$ we have
	\begin{align*}
	|C_bHf(x)|&=\Big| \int_{100}^\infty (b_1(x)-b_1(y)) (b_2(x)-b_2(y))\frac{f(y)}{x-y}dy\Big| =\Big| \int_{100}^\infty   (\sgn(x)-\sqrt{\frac{y+1}2})\frac{f(y)}{x-y}dy\Big|\\
	&\sim\int_{100}^\infty   \sqrt{\frac{y+1}2}\frac{f(y)}{y-x}dy\sim \int_{100}^\infty \frac {f(y)}{\sqrt y}dy=\infty.
		\end{align*}
	
\end{proof}

If we take $A(t)=B(t)=C(t)=t^{2+\varepsilon}$, where $\varepsilon>0$, we immediately have the following
\begin{corollary}\label{JNCEE} The conditions $S_2,T_2$ holding simultaneously does not improve to $S_{2+\varepsilon}$ or $T_{2+\varepsilon}$ for any $\varepsilon > 0$.
\end{corollary}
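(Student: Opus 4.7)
The plan is to deduce this directly from Theorem \ref{jnce-1} by specializing the Young functions. The statement asserts the existence of a pair $(b_1,b_2)$ with $S_2 + T_2 < \infty$ but failing $S_{2+\varepsilon}$ and $T_{2+\varepsilon}$ for every $\varepsilon>0$. The natural candidates are the $b_1,b_2$ already constructed in Theorem \ref{jnce-1}; the task is to show that the conditions $S_{2+\varepsilon}$ and $T_{2+\varepsilon}$ are special cases of $S_{A,B}$ and $T_C$ for admissible Young functions.

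First I would fix an arbitrary $\varepsilon>0$ and set $A(t)=B(t)=C(t)=t^{2+\varepsilon}$. These are Young functions, and their complementaries satisfy $\bar A(t)=\bar B(t)=\bar C(t) \sim t^{(2+\varepsilon)/(1+\varepsilon)}$ up to multiplicative constants. To apply Theorem \ref{jnce-1} I must check the admissibility hypothesis $\bar A,\bar B,\bar C\in B_2$; this amounts to verifying the integral
\begin{equation*}
\int_1^\infty \frac{t^{(2+\varepsilon)/(1+\varepsilon)}}{t^2}\,\frac{dt}{t} = \int_1^\infty t^{(2+\varepsilon)/(1+\varepsilon)-3}\,dt,
\end{equation*}
which converges precisely because $(2+\varepsilon)/(1+\varepsilon)<2$ for every $\varepsilon>0$. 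Thus the Young functions are legitimate choices in Definition \ref{sufdef}.

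Next I would identify the Luxemburg averages for these power Young functions with ordinary $L^p$-averages. A one-line computation from the definition of the Luxemburg norm shows that for $A(t)=t^p$ one has $\langle |f|\rangle_{A,Q}=\langle |f|^p\rangle_Q^{1/p}$. Applying this with $p=2+\varepsilon$ to each of the three factors gives
\begin{equation*}
S_{A,B}(b_1,b_2)=S_{2+\varepsilon}(b_1,b_2),\qquad T_C(b_1,b_2)=T_{2+\varepsilon}(b_1,b_2).
\end{equation*}

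Finally, Theorem \ref{jnce-1} produces $b_1,b_2\in L^\infty_{\mathrm{loc}}(\R)$ with $S_2(b_1,b_2)+T_2(b_1,b_2)<\infty$ yet $S_{A,B}(b_1,b_2)=\infty$ and $T_C(b_1,b_2)=\infty$ for \emph{every} admissible triple of Young functions, in particular the ones chosen above. Substituting the identifications from the previous step yields $S_{2+\varepsilon}(b_1,b_2)=\infty$ and $T_{2+\varepsilon}(b_1,b_2)=\infty$, which is exactly the assertion of the corollary. There is no real obstacle here: the entire content is already contained in Theorem \ref{jnce-1}, and the only thing to do is to verify that the power Young functions $t^{2+\varepsilon}$ meet the hypotheses of that theorem and recover the classical $L^p$-means from the Luxemburg formalism.
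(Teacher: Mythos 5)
Your proposal is correct and takes essentially the same route as the paper: the paper's proof is literally the one-line observation that choosing $A(t)=B(t)=C(t)=t^{2+\varepsilon}$ and applying Theorem \ref{jnce-1} gives the result. You have simply spelled out the two routine verifications (admissibility $\bar A,\bar B,\bar C\in B_2$ via $p'=(2+\varepsilon)/(1+\varepsilon)<2$, and the identification of the Luxemburg norm of a power Young function with the corresponding $L^p$-average) that the paper leaves implicit.
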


For our next example, we note that functions $\Phi:[0,\infty)\to[0,\infty)$ of the form
\[
\Phi(t) = t^p\log(e+t)^{p-1+\delta},\qquad p\in (1,\infty), \ \delta\in (0,\infty)
\]
are called log -bumps.
These are Young functions, and we recall some facts from \cite[Chapter 5]{CMP10}:
\begin{enumerate}[i)]
\item If $\Phi(t) = t^p\log(e+t)^{p-1+\delta},$ then $$\Phi^{-1}(t) \sim t^{1/p}\log(e+t)^{-\frac{1}{p'}-\frac \delta p} \mbox{ and } \bar \Phi(t) \sim t^{p'}[\log(e+t)]^{-1-(p'-1)\delta}\in B_{p'}.$$
    \item If $\Phi(t) = t^p\log(e+t)^{p-1}\log\log(e^e+t)^{p-1+\delta}$(which is referred to as a loglog-bump),
    then $$\Phi^{-1}(t) \sim t^{\frac 1 p}\log(e+t)^{-\frac1{p'}}\log\log(e^e+t)^{-\frac{1}{p'}-\frac \delta p}$$
    and
    $$\bar \Phi(t) \sim t^{p'}\log(e+t)^{-1}[\log\log(e^e+t)]^{-1-(p'-1)\delta}\in B_{p'}.$$
\end{enumerate}

\begin{theorem}\label{lastexample} There exist functions $b_1,b_2 \in L^{\infty}_{loc}$ such that $C_bH:L^2\to L^2$ boundedly, but $S_{A,B}(b_1,b_2)=\infty$ and $T_C(b_1,b_2)=\infty,$ for all log -bumps $A,B,C$ with $\bar A,\bar B,\bar C\in B_2.$
\end{theorem}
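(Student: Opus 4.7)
My plan is to modify the construction in Theorem \ref{jnce-1} by slightly slowing the growth of $b_2$ at infinity: just enough to secure the $L^2$-boundedness of the commutator, but not enough to save any log-bump joint condition. Keep $b_1(x)=\sgn(x)1_{I_0}(x)$ with $I_0=[-1,1]$ and $w=M(1_{I_0})^{-1}$ as before, but set
\[
b_2(x)=\sgn(x)\,\frac{w^{1/2}(x)}{g(|x|)},\qquad g(t)=\log^{1/2}(e+t)\,e^{\sqrt{\log\log(e^e+t)}}.
\]
The pair is odd and in $L^\infty_{\loc}$, with $b_2(x)^2\sim |x|/(2g(|x|)^2)$ at infinity. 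The weight $g$ is engineered to meet two competing requirements: (a) $\int_1^\infty \frac{dt}{t\,g(t)^2}<\infty$, seen after the substitution $u=\log\log t$, which reduces the integral to $\int e^{-2\sqrt u}\,du$; and (b) $g(t)=o(\log^\varepsilon(t))$ for every $\varepsilon>0$, since $\sqrt{\log\log t}$ is eventually dominated by $\varepsilon\log\log t$.

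For $L^2$-boundedness of $C_bH$, I would expand on $L_c^\infty$
\[
C_bHf=b_1 b_2\, Hf-b_2 H(b_1 f)-b_1 H(b_2 f)+H(b_1 b_2 f).
\]
Since $b_1 b_2=1_{I_0}/g\in L^\infty$ is compactly supported, the first and last terms are immediately controlled in $L^2$. For $b_2 H(b_1 f)$, the support of $b_1 f$ is in $I_0$, so $|H(b_1 f)(x)|\lesssim \|f\|_{L^2}/|x|$ for $|x|>2$, and the far-field is controlled by $\int_{2}^\infty b_2^2/x^2\,dx\lesssim\int_1^\infty dt/(tg(t)^2)<\infty$ by (a). For $b_1 H(b_2 f)$, the factor $b_1$ confines matters to $x\in I_0$, and splitting the defining integral into $|y|\le 2$ (where $b_2\in L^\infty([-2,2])$) and $|y|>2$ reduces the second piece to the same key integral via Cauchy-Schwarz against $b_2(y)/(x-y)$. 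Density then extends the bound from $L_c^\infty$ to $L^2$.

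To obtain $S_{A,B}(b_1,b_2)=T_C(b_1,b_2)=\infty$ for every log-bump with complementary in $B_2$, I follow Theorem \ref{jnce-1} and test on $J_k=(-k,k)$ and $I_k=(0,k)$ with $k\to\infty$. Writing $A(t)\sim t^p\log^{p-1+\delta_A}(e+t)$, $B(t)\sim t^p\log^{p-1+\delta_B}(e+t)$ (admissibility forces $p\ge 2$, and $\delta_A,\delta_B>0$ when $p=2$), asymptotic computation of the Luxemburg norms gives
\[
\bla b_1\bra_{A,J_k}\,\bla b_2\bra_{B,J_k}\;\sim\;k^{\,1/2-1/p}\,\frac{(\log k)^{\alpha(p,\delta_A,\delta_B)}}{e^{\sqrt{\log\log k}}}
\]
with $\alpha>0$, and analogously for $T_C$ on $I_k$. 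Setting $u=\log\log k$, the exponent of $\log k$ grows linearly in $u$ whereas $e^{\sqrt{\log\log k}}=e^{\sqrt u}$ grows only in $\sqrt u$, so each product diverges for every admissible log-bump.

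The main obstacle is the tuning of $g$: it must exceed $\log^{1/2}$ just enough to render $\int dt/(tg(t)^2)$ convergent (for the commutator bound), yet remain strictly smaller than every $\log^{1/2+\varepsilon}$ (so that no single log-bump parameter $\delta>0$ can save the joint condition). The iterated-exponential choice $g=\log^{1/2}e^{\sqrt{\log\log}}$ threads this needle, and the rest of the work is asymptotic bookkeeping of the Luxemburg integrals, following the same template as Theorem \ref{jnce-1} but tracking the $g$-correction carefully at each step.
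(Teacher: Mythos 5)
Your construction and overall strategy are sound, and the target object is essentially the same as the paper's: $b_1=\sgn\cdot 1_{I_0}$, $b_2$ odd and growing like $|x|^{1/2}$ divided by a slowly-varying correction tuned to fall strictly below every log-bump while still making the commutator $L^2$-bounded. The paper picks a loglog-bump $\Phi_0(t)=t^2\log(e+t)\log\log(e^e+t)^{3/2}$ (with $\bar\Phi_0\in B_2$) and sets $b_2=\sgn\cdot\Phi_0^{-1}\big((M1_{I_0})^{-1}\big)$, so that $b_2$ picks up a $\log^{-1/2}(\log\log)^{-3/4}$ correction; you instead use the correction $g(t)=\log^{1/2}(e+t)e^{\sqrt{\log\log(e^e+t)}}$, which decays even faster. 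Both choices thread the same needle and the divergence computations for $S_{A,B}$ and $T_C$ on $J_k=(-k,k)$ and $I_k=(0,k)$ are, as you set them up, correct (for $p=2$ the surviving exponent is $\delta_A/2>0$ on $\log k$ against $e^{-\sqrt{\log\log k}}$, and for $p>2$ the polynomial factor $k^{1/2-1/p}$ already blows up).

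Where you genuinely diverge from the paper is the $L^2$-boundedness step. The paper verifies that $(b_1,b_2)$ satisfies $S_{\Phi_0,\Phi_0}+T_{\Phi_0}<\infty$ and invokes Theorem~\ref{upperbound}; you bypass the Young-function machinery entirely and estimate $C_bHf = b_1b_2\,Hf - b_2H(b_1f) - b_1H(b_2f) + H(b_1b_2f)$ term by term, exploiting the compact support of $b_1$ (and of $b_1b_2$) so that the off-diagonal kernel decay and Cauchy--Schwarz reduce everything to the convergence of $\int_1^\infty \frac{dt}{t\,g(t)^2}$. That argument is correct and is more elementary, but it is also more fragile: it leans on $b_1$ having compact support and would not survive a perturbation of that feature. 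More importantly it discards a structural fact the paper's proof establishes for free, namely that this example \emph{does} satisfy some Young-function joint condition with conjugates in $B_2$ — precisely the content of Conjecture~\ref{conj}. For the theorem as stated either route suffices, but the paper's route is the one that supports the surrounding narrative.

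One small slip: the claim that $g(t)=o(\log^\varepsilon t)$ for every $\varepsilon>0$ is false as written, since $g\ge \log^{1/2}(e+t)$. What you mean (and what the argument uses) is that $e^{\sqrt{\log\log t}}=o(\log^\varepsilon t)$ for every $\varepsilon>0$, so that the extra factor in $g$ is beaten by any positive power of $\log$ supplied by a log-bump; state it that way.
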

\begin{proof}
The idea is to construct a pair of  functions $(b_1, b_2)$ such that it satisfies the assumption in Theorem \ref{upperbound} so that we can conclude the boundedness of $C_bH$ directly, meanwhile, the related bump function increases slower than log-bumps.
Let
$\Phi_0=t^2\log(e+t)\log\log(e^e+t)^{3/2}$, and define
\[
b_1(x)=\sgn(x)1_{I_0}(x),\qquad b_2(x) = \sgn(x) \Phi_0^{-1}\big((M1_{I_0}(x))^{-1}\big),\ \ I_0=[-1,1].
\]We will show that $(b_1, b_2)$ is what we need. First of all, it is easy to check that for any cube $I$,
\begin{align*}
\langle |b_1|\rangle_{\Phi_0, I} \langle |b_2|\rangle_{\Phi_0,I}\le \Big\langle \Phi_0^{-1}\big((M1_{I_0}(x))^{-1}\big)^{-1} |b_2|\Big\rangle_{\Phi_0,I}\le 1.
\end{align*}
Then by the triangle inequality and general H\"older's inequality we have
\[
\langle |b_1-\langle b_1\rangle_I|\rangle_{\Phi_0, I} \langle |b_2-\langle b_2\rangle_I|\rangle_{\Phi_0,I}\lesssim 1.
\]
On the other hand, since $|b_1b_2|\le  1$, using triangle inequality and general H\"older's inequality again we have
\[
\langle |b_1-\langle b_1\rangle_I|  |b_2-\langle b_2\rangle_I|\rangle_{\Phi_0,I}\lesssim 1.
\]
Using Theorem \ref{upperbound} we know that $C_bH$ is bounded on $L^2$, thanks to $\bar\Phi_0\in B_2$.

It remains to show that $S_{A,B}(b_1,b_2)=\infty$ and $T_C(b_1,b_2)=\infty,$ for all log -bumps $A,B,C$ with $\bar A,\bar B,\bar C\in B_2.$
Without loss of generality we can assume that $A(t)=t^2 \log(e+t)^{1+\alpha}$, $B(t)=t^2 \log(e+t)^{1+\beta}$ and $C(t)=t^2 \log(e+t)^{1+\gamma}$, where $\alpha, \beta, \gamma>0$.
For $S_{A,B}(b_1, b_2)$ again we test with the interval $J_k=(-k,k)$ with $k\ge 2$. Since $b_1$ and $b_2$ are odd functions, we have
\begin{align*}
\langle |b_1-\langle b_1\rangle_{J_k}|\rangle_{A, J_k} \langle |b_2-\langle b_2\rangle_{J_k}|\rangle_{B,J_k}&=\langle |b_1|\rangle_{A, J_k} \langle |b_2|\rangle_{B,J_k}\\
&\simeq k^{-\frac 12}\log(e+k)^{\frac{1+\alpha}2} k^{\frac 12}\log(e+k)^{-\frac 12}\log\log(e^e+k)^{-\frac 34}\\
&\overset{k\rightarrow\infty}{\rightarrow}\infty.
\end{align*}
For $T_C(b_1, b_2)$, we test with the cube $I_k=(0,k)$, $k\ge 100$,  we have
\begin{align*}
	T_{C}(b_1, b_2)&\ge \lim_{k\rightarrow \infty}  \bla |b_1- \langle b_1\rangle_{I_k}| |b_2- \langle b_2\rangle_{I_k}| \bra_{ C, I_k} \ge \lim_{k\rightarrow \infty}  \bla |b_1- \langle b_1\rangle_{I_k}| |b_2- \langle b_2\rangle_{I_k}| 1_{(0,1)}\bra_{ C, I_k}\\
	&=\lim_{k\rightarrow \infty} (1-k^{-1})  \bla   |b_2- \langle b_2\rangle_{I_k}| 1_{(0,1)}\bra_{ C, I_k}\\
&\simeq \lim_{k\rightarrow \infty} (1-k^{-1})k^{\frac 12}\log(e+k)^{-\frac 12}\log\log(e^e+k)^{-\frac 34} k^{-\frac 12}\log(e+k)^{\frac{1+\alpha}2}=\infty.
	\end{align*}
\end{proof}

\begin{corollary}\label{notsuff} The conditions $S_{2+\varepsilon},T_{2+\varepsilon}$ are not precise enough to yield a characterization of $C_bH:L^2\to L^2.$
\end{corollary}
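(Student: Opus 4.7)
The plan is to deduce the corollary directly from Theorem \ref{lastexample} by comparing the pure power condition $S_{2+\varepsilon}+T_{2+\varepsilon}$ with a suitable log-bump refinement. A genuine characterization would mean $C_bH\colon L^2\to L^2$ boundedly if and only if $S_{2+\varepsilon}(b_1,b_2)+T_{2+\varepsilon}(b_1,b_2)<\infty$, and since the ``if'' direction is exactly Corollary \ref{rmk:p} with $p=2+\varepsilon$, what I need is a counterexample to ``only if''. The candidate is the pair $(b_1,b_2)$ already constructed in Theorem \ref{lastexample}.

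Given $\varepsilon>0$, I would pick a log-bump $\Phi(t):=t^{2}\log(e+t)^{1+\alpha}$ with any $\alpha>0$. The first of the recalled log-bump facts yields $\bar\Phi\in B_{2}$, so $\Phi$ is an admissible Young function in both $S_{\Phi,\Phi}$ and $T_{\Phi}$. Since $\log(e+t)^{1+\alpha}=o(t^{\varepsilon})$ as $t\to\infty$, one has $\Phi(t)\le C_\varepsilon t^{2+\varepsilon}$ for all sufficiently large $t$, which in the paper's ordering reads $t^{2+\varepsilon}\succeq\Phi$. Part iii) of Proposition \ref{prop:young} then gives the uniform comparison
\begin{equation*}
\langle |f|\rangle_{\Phi,Q}\lesssim \Bigl(\frac{1}{|Q|}\int_Q |f|^{2+\varepsilon}\Bigr)^{1/(2+\varepsilon)},
\end{equation*}
and applying this with $f=|b_i-\langle b_i\rangle_Q|$ and with $f=|b_1-\langle b_1\rangle_Q||b_2-\langle b_2\rangle_Q|$ yields
\begin{equation*}
S_{\Phi,\Phi}(b_1,b_2)\lesssim S_{2+\varepsilon}(b_1,b_2),\qquad T_{\Phi}(b_1,b_2)\lesssim T_{2+\varepsilon}(b_1,b_2).
\end{equation*}

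Theorem \ref{lastexample} supplies $b_1,b_2\in L^\infty_{\loc}$ with $C_bH$ bounded on $L^2$ but $S_{A,B}(b_1,b_2)=T_C(b_1,b_2)=\infty$ for \emph{every} admissible log-bump triple $A,B,C$. Specializing to $A=B=C=\Phi$ and combining with the display above therefore forces $S_{2+\varepsilon}(b_1,b_2)+T_{2+\varepsilon}(b_1,b_2)=\infty$ for this one fixed pair. Since $\varepsilon>0$ was arbitrary, the same pair simultaneously ruins the conjectured characterization for every $\varepsilon>0$. The only mildly delicate point is checking that $t^{2+\varepsilon}\succeq\Phi$ in the exact sense required by Proposition \ref{prop:young} iii), but this is immediate from weighing the extra factor $t^{\varepsilon}$ against the logarithmic factor in $\Phi$, so I anticipate no real obstacle in carrying out the argument.
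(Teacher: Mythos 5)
Your argument is correct and follows essentially the same route as the paper: take the pair $(b_1,b_2)$ from Theorem \ref{lastexample}, observe that $t^{2+\varepsilon}\succeq\Phi$ for any log-bump $\Phi(t)=t^2\log(e+t)^{1+\alpha}$, and use the ordering of Luxemburg norms to transfer the divergence of $S_{\Phi,\Phi}$ and $T_\Phi$ to $S_{2+\varepsilon}$ and $T_{2+\varepsilon}$. The only cosmetic difference is that you invoke Proposition \ref{prop:young}~iii) explicitly (which is the comparison actually needed), whereas the paper cites iv); your citation is the cleaner one, but the underlying argument is identical.
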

\begin{proof}
	The iterated commutator $C_bH$ of Theorem \ref{lastexample} is bounded on $L^2.$ We show that the conditions $S_{2+\varepsilon}(b_1,b_2)$ and $T_{2+\varepsilon}(b_1,b_2)$ are not satisfied for any $\varepsilon>0$.
 	To see this, it is enough to notice that for all log -bumps $A,B,C$ with $\bar A,\bar B,\bar C \in B_2,$ we have $t^{2+\varepsilon} \succeq A(t),B(t),C(t)$ by which by Proposition \ref{prop:young} iv) and the estimates in  Theorem \ref{lastexample} it follows that
	\begin{equation*}
		\langle\abs{b_1-\langle b_1\rangle_{J_k}}^{2+\varepsilon}\rangle_{J_k}^{\frac{1}{2+\varepsilon}}	\langle\abs{b_2-\langle b_2\rangle_{J_k}}^{2+\varepsilon}\rangle_{J_k}^{\frac{1}{2+\varepsilon}} \gtrsim \langle\abs{b_1-\langle b_1\rangle_{J_k}}\rangle_{A,J_k} \langle\abs{b_2-\langle b_2\rangle_{J_k}}\rangle_{B,J_k} \to \infty,
	\end{equation*}
	and
	\begin{equation*}
		\langle\abs{b_1-\langle b_1\rangle_{I_k}}^{2+\varepsilon}\abs{b_2-\langle b_2\rangle_{I_k}}^{2+\varepsilon}\rangle_{I_k}^{\frac{1}{2+\varepsilon}} \gtrsim 	\langle\abs{b_1-\langle b_1\rangle_{I_k}}\abs{b_2-\langle b_2\rangle_{I_k}}\rangle_{C,I_k} \to \infty,
	\end{equation*}
	as $k\rightarrow\infty$, showing that $S_{2+\varepsilon}(b_1,b_2) = \infty$ and $T_{2+\varepsilon}(b_1,b_2)=\infty.$	
\end{proof}

\begin{corollary}\label{altnotsuff} The commutator of Theorem \ref{lastexample} is bounded on $L^2$ and unbounded on all $L^p,$ $p\in(1,\infty)\setminus\{2\}.$
	\begin{proof}
	Let $p>2,$ $q\in(2,p)$ and
	$f(x) = x^{-1/q}1_{[100,\infty)}(x)\in L^p.$
	For all $x\in[-1,1],$
	\begin{equation*}
	\begin{split}
	&|C_bHf(x)| = \left|\int(b_1(x)-b_1(y))(b_2(x)-b_2(y))\frac{f(y)}{x-y}\ud y\right| \\
	&\sim \int_{100}^\infty \frac{y^{\frac 12}}{\log(e+y)^{\frac 12}\log\log(e^e+y)^{\frac 34}}\frac{y^{-\frac{1}{q}}}{y}\ud y = \infty,
	\end{split}
	\end{equation*}
	showing that $C_bH:L^p\not\to L^p.$ It follows  by duality that also $C_bH: L^{p'}\not\to L^{p'}.$
\end{proof}
\end{corollary}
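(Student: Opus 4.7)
The $L^2$ boundedness is immediate from Theorem \ref{lastexample}. For unboundedness on $L^p$ with $p\neq 2$, the plan is to treat $p>2$ by a direct construction of a test function and reduce $p<2$ to this case by duality. The underlying phenomenon is that the function $b_2$ of Theorem \ref{lastexample} grows just barely slower than $y^{1/2}$ at infinity, so the sharpest power function lying in $L^p\setminus L^2$ for $p>2$ should suffice to detect an unboundedness that $L^2$ cannot see.

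For the case $p>2$, I would pick $q\in(2,p)$ and set $f(y)=y^{-1/q}1_{[100,\infty)}(y)$; then $\int\abs{f}^p=\int_{100}^\infty y^{-p/q}\ud y<\infty$ since $p/q>1$. To show $C_bHf\notin L^p$ it suffices to show $\abs{C_bHf(x)}=\infty$ on a set of positive measure, and I would take $x\in I_0=[-1,1]$. There both $b_1(x)=\sgn(x)$ and $b_2(x)$ are $O(1)$; for $y\ge 100$ one has $b_1(y)=0$ (since $y\notin I_0$), while $M1_{I_0}(y)\sim y^{-1}$ gives $b_2(y)\sim\Phi_0^{-1}(y)$. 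Consequently the kernel factor $(b_1(x)-b_1(y))(b_2(x)-b_2(y))$ is of size $\Phi_0^{-1}(y)$ for large $y$, and $C_bHf(x)$ is comparable, up to bounded terms, to
\[
\int_{100}^\infty \frac{\Phi_0^{-1}(y)}{y-x}\, y^{-1/q}\ud y \sim \int_{100}^\infty \Phi_0^{-1}(y)\, y^{-1-1/q}\ud y.
\]
Using the loglog-bump inversion formula recalled just before Theorem \ref{lastexample}, namely $\Phi_0^{-1}(y)\sim y^{1/2}\log(e+y)^{-1/2}\log\log(e^e+y)^{-3/4}$, the integrand behaves like $y^{-1/2-1/q}$ modulo mild log and loglog factors, and since $1/q<1/2$ the integral diverges at infinity. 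This gives $\abs{C_bHf(x)}=\infty$ for every $x\in I_0$, contradicting $C_bHf\in L^p$.

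For $p<2$, since $b_1,b_2$ are real-valued and $H^*=-H$, a direct pairing computation shows that $[b_1,H]$ is self-adjoint, so that $C_bH=[b_2,[b_1,H]]$ is skew-adjoint; hence $\bNorm{C_bH}{L^p\to L^p}=\bNorm{C_bH}{L^{p'}\to L^{p'}}$, and the case $p'>2$ just established yields unboundedness on $L^p$. The main obstacle I anticipate is verifying that the log and loglog factors do not disrupt the clean power-function divergence, but since $\Phi_0$ was calibrated precisely at the threshold making $C_bH$ bounded on $L^2$ the log factors are benign, and a crude comparison is all that is needed.
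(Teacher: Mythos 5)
Your proposal is correct and takes essentially the same route as the paper: the same test function $f(y)=y^{-1/q}1_{[100,\infty)}$ with $q\in(2,p)$, the same reduction to the divergence of $\int_{100}^\infty \Phi_0^{-1}(y)\,y^{-1-1/q}\,dy$, and a final duality step. The only difference is cosmetic: you explicitly spell out why duality applies (by noting $[b_1,H]$ is self-adjoint and hence $C_bH$ is skew-adjoint for real $b_1,b_2$), whereas the paper just states ``by duality'' without elaboration.
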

\begin{remark}  Alternatively, we can prove Corollary \ref{notsuff} by Corollary \ref{altnotsuff}. Indeed, if the conditions $S_{2+\varepsilon}(b_1,b_2),T_{2+\varepsilon}(b_1,b_2)$ hold for some $\varepsilon >0,$ then by Remark \ref{altsuf} we have $C_bH:L^q\to L^q$ boundedly for all $q\in(2,2+\varepsilon),$ a contradiction with Corollary \ref{altnotsuff}.

\end{remark}
The above considerations lead us to conjecture:
\begin{conjecture}\label{conj}
With the functions $b_1,b_2$ subject to the same assumptions as those in Theorems \ref{upperbound} and \ref{lowerbound},
the boundedness of $[b_1, [b_2, H]]$ on $L^2(\R)$ is equivalent with the existence of Young functions $A,B, C$ with $\bar A, \bar B, \bar C\in B_2$ such that $S_{A,B}(b_1,b_2)+T_C(b_1, b_2)<\infty$.
\end{conjecture}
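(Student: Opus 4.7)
The sufficiency direction $(\Leftarrow)$ of Conjecture \ref{conj} is already Theorem \ref{upperbound}, so my plan addresses only the necessity direction $(\Rightarrow)$: given $\|C_bH\|_{L^2(\R)\to L^2(\R)}<\infty$, construct Young functions $A,B,C$ with $\bar A,\bar B,\bar C\in B_2$ satisfying $S_{A,B}(b_1,b_2)+T_C(b_1,b_2)<\infty$.

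The natural first move is to broaden the testing of Lemma \ref{rieszlemma} with $p=2$. Writing $\psi_i=b_i-\langle b_i\rangle_Q$ and exploiting $\dashint_Q\psi_i=0$, the choice $f_1=\overline{\sgn(\psi_1\psi_2)}g\,1_Q$ and $f_2=1_Q$ (with $g\ge 0$) yields, after expanding $(\psi_1(x)-\psi_1(y))(\psi_2(x)-\psi_2(y))$ and absorbing the one surviving cross-term via the $T_2$-bound of Theorem \ref{lowerbound}, the uniform-in-$Q$ estimate
\[
\bigl\langle|\psi_1\psi_2|g\bigr\rangle_Q\lesssim \|C_bH\|\langle g^2\rangle_Q^{1/2}.
\]
A symmetric product testing with $f_1=\overline{\sgn\psi_1}g_1$, $f_2=\overline{\sgn\psi_2}g_2$ should give the bisublinear companion
\[
\bigl\langle|\psi_1|g_1\bigr\rangle_Q\bigl\langle|\psi_2|g_2\bigr\rangle_Q\lesssim \|C_bH\|\langle g_1^2\rangle_Q^{1/2}\langle g_2^2\rangle_Q^{1/2}+\text{controllable error}.
\]
The second step would be to translate these dual inequalities into Luxemburg bounds via the duality $L^A(Q)^*=L^{\bar A}(Q)$: a uniform control of $\bigl\langle|\psi_1\psi_2|g\bigr\rangle_Q$ over a suitable cone of test $g\in L^{\bar C}(Q)$ is precisely $\langle|\psi_1\psi_2|\rangle_{C,Q}\lesssim 1$ for an associated Young function $C$, and similarly for $(A,B)$ from the product estimate. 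Step three would assemble the cube-dependent Young functions into a single universal triple $(A,B,C)$ with $\bar A,\bar B,\bar C\in B_2$; since the $B_2$ class is open at its lower end (inflation by a slow factor such as $\log\log$ is always admissible), this amalgamation is feasible provided the $B_2$-constants are uniform in $Q$.

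The main obstacle, and the honest reason the conjecture remains open, is that the scalar $L^2$-testing above is self-dual: by Cauchy--Schwarz the inequality $\bigl\langle|\psi_1\psi_2|g\bigr\rangle_Q\lesssim \|C_bH\|\langle g^2\rangle_Q^{1/2}$ is \emph{equivalent} to $\langle|\psi_1\psi_2|^2\rangle_Q^{1/2}\lesssim\|C_bH\|$, that is, to $T_2$ alone, and the product testing collapses to $S_2$ in the same way. Thus no amount of scalar $L^2$-testing can extract a Young function strictly above $t^2$, whereas by Theorem \ref{jnce-1} the conditions $S_2+T_2$ are strictly weaker than any admissible $S_{A,B}+T_C$. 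A successful proof must therefore exploit the full operator-theoretic $L^2\to L^2$ content of $C_bH$, not its scalar moments -- presumably through vector-valued/Rademacher-randomized testing on Haar expansions of $\psi_1,\psi_2$, or through the wavelet matrix coefficients of $C_bH$. This would constitute a genuinely bisublinear analogue of the John--Nirenberg self-improvement underlying the linear characterisation $[b,H]\in\bddlin(L^2)\iff b\in\BMO$, and the rule-out of $L^p$-bootstraps with $p\neq 2$ given by Corollary \ref{altnotsuff} shows that the mechanism must remain internal to $L^2$. Identifying it is, in my view, the heart of the conjecture.
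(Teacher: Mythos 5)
This statement is labelled a \emph{conjecture} in the paper and the authors offer no proof of it; there is nothing in the paper for your submission to be compared against. Your write-up is not in fact a proof either — and, to your credit, you say as much explicitly. You correctly note that the sufficiency direction ($\Leftarrow$) is exactly Theorem \ref{upperbound}, and you correctly diagnose the central obstacle on the necessity side: testing the bilinear form of Lemma \ref{rieszlemma} at exponent $p=2$ against scalar $g\in L^2(Q)$ is, by Cauchy--Schwarz and duality, exactly equivalent to the $T_2$ (resp.\ $S_2$) condition, and cannot by itself extract a Young function strictly stronger than $t^2$. Your first displayed estimate (testing with $f_1=\overline{\sgn(\psi_1\psi_2)}g\,1_Q$, $f_2=1_Q$, absorbing the single cross term via $T_2$) is a correct computation, and you are right that the ``product testing'' is the sketchier of the two and that its cross terms do not vanish as cleanly. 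You also correctly invoke Theorem \ref{jnce-1} and Corollary \ref{altnotsuff} to show that neither $S_2+T_2$ nor an $L^p$-bootstrap can close the gap.

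The genuine gap is simply that no necessity argument is given: steps two and three of your plan (passing from scalar $L^2$-tested dual estimates to Luxemburg bounds for some $\bar C\in B_2$, and then amalgamating cube-dependent Young functions into one universal triple) are exactly the steps that, as you yourself explain in your final paragraph, scalar $L^2$-testing cannot carry out. Saying the $B_2$ class is ``open at its lower end'' is not enough: openness lets you inflate a \emph{given} admissible Young function slightly, but it does not manufacture, from $S_2+T_2$ plus operator boundedness, any Young function at all that lies strictly above $t^2$ uniformly over cubes — indeed, Theorem \ref{jnce-1} exhibits a pair for which $S_2+T_2<\infty$ yet no such Young functions exist and $C_bH$ is unbounded, so the sought self-improvement (if it exists) must genuinely use the $L^2\to L^2$ bound on $C_bH$ and not just the scalar inequalities it implies. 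Your final paragraph is an accurate description of the open problem, not a proof of it. In short: your discussion of the obstruction is sound and consistent with the paper, but no proof has been produced (nor does the paper contain one), so the conjecture stands unresolved.
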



\begin{thebibliography}{10}

\bibitem{BLOOM} S. Bloom, A commutator theorem and weighted
	BMO
	, Trans. Amer. Math. Soc. 292 (1985) 103--122.
	
	\bibitem{CRW} R. Coifman, R. Rochberg, G. Weiss, Factorization theorems for Hardy spaces in several variables, Ann. of Math. (2) 103 (1976) 611--635.
	
	\bibitem{CMP10}
	D. Cruz-Uribe, J.M. Martell, C. P\'erez,
	Weights, extrapolation and the theory of Rubio de Francia. Operator Theory: Advances and Applications, 215. Birkh\"auser/Springer Basel AG, Basel, 2011.
	
	
	
	\bibitem{CFA} L. Grafakos, Classical fourier analysis, Graduate texts in mathematics, Springer, 2014, Third edition.
	
	\bibitem{HLW2}I. Holmes, M. Lacey, B. Wick, Bloom's inequality: commutators in a two-weight setting, Arch. Math. (Basel) 106  (2016) 53--63.
	
	\bibitem{Hy5} T. Hyt\"onen, The $L^p$-to-$L^q$ boundedness of commutators with applications to the Jacobian operator, preprint, arXiv:1804.11167, 2018.
	
	\bibitem{IR} G. H. Ib\'anez-Firnkorn, I. Rivera-R\'ios, Sparse and Weighted Estimates for Generalized H\"ormander Operators and Commutators, Monatsh. Math. 191 (2020) 125--173.

	
	\bibitem{JANS} S. Janson, Mean oscillation and commutators of singular integral operators, Ark. Mat., Volume 16, Number 1-2 (1978), 263-270.
	
	\bibitem{L} A. K. Lerner, On Pointwise Estimates Involving Sparse Operators,  New York J. Math., 22 (2016) 341--349.
	
	\bibitem{LO19} A. Lerner, S. Ombrosi, Some remarks on the pointwise sparse domination, J. Geom. Anal. 30 (2020) 1011--1027.


	

	\bibitem{LOPTT} A. Lerner, S. Ombrosi, C. P\'erez, R.H. Torres, R. Trujillo-Gonz\'alez, New maximal functions and multiple weights for the multilinear Calder\'on-Zygmund theory, Adv. Math. 220 (2009) 1222--1264.

	
	\bibitem{LOR1} A. Lerner, S. Ombrosi, I. Rivera-R\'ios, On pointwise and weighted estimates for commutators of Calder\'on-Zygmund operators, Adv. Math. 319 (2017) 153--181.
	
	\bibitem{LOR2} A. Lerner, S. Ombrosi, I. Rivera-R\'ios, Commutators of singular integrals revisited, Bull. London Math. Soc. 51 (2019) 107--119.
	
	\bibitem{BTBP} K. Li, H. Martikainen, E. Vuorinen, Bloom Type inequality for bi-parameter singular integrals: Efficient proof and iterated commutators, to appear in Int. Math. Res. Not., doi.org/10.1093/imrn/rnz072
	
	\bibitem{P95}
	C. P\'erez. On sufficient conditions for the boundedness of the Hardy-Littlewood maximal operator between weighted $L^p$-spaces with different weights. Proc. London Math. Soc. (3), 71(1995), 135--157.
	
	\bibitem{STEIN} E. Stein, Singular Integrals and Differentiability Properties of Functions, Princeton University Press, 1971.

	
	\bibitem{STEINWEISS} E. Stein, G. Weiss, Introduction to Fourier Analysis on Euclidean Spaces, Princeton University Press, 1971.
	
	
	
	
	
\end{thebibliography}
\end{document}